\newcommand{\mytitle}{Optimal Convergence Rates Results for Linear Inverse Problems in Hilbert Spaces}
\title{\mytitle}
\author%
{%
	V.~Albani$^1$\\{\footnotesize\href{mailto:vvla@impa.br}{vvla@impa.br}} \and 
	P.~Elbau$^1$\\{\footnotesize\href{mailto:peter.elbau@univie.ac.at}{peter.elbau@univie.ac.at}} \and 
	M.~V.~de~Hoop$^2$\\{\footnotesize\href{mailto:mdehoop@rice.edu}{mdehoop@rice.edu}} \and 
	O.~Scherzer$^{1,3}$\\{\footnotesize\href{mailto:otmar.scherzer@univie.ac.at}{otmar.scherzer@univie.ac.at}}
}
\date{}
\titleformat{\section}{\filcenter\sc\large}{\thesection.\;}{0em}{}
\titleformat{\subsection}[runin]{\bf}{\thesubsection.\;}{0em}{}[.]
\footnotesize\sc{\mytitle}}%
\theoremstyle{break}
\newaliascnt{proposition}{theorem}
\newtheorem{proposition}[proposition]{Proposition}
\newaliascnt{corollary}{theorem}
\newtheorem{corollary}[corollary]{Corollary}
\newaliascnt{lemma}{theorem}
\newtheorem{lemma}[lemma]{Lemma}
\newaliascnt{definition}{theorem}
\newtheorem{definition}[definition]{Definition}
\newaliascnt{notation}{theorem}
\newtheorem{notation}[notation]{Notation}
\newaliascnt{example}{theorem}
\newtheorem{example}[example]{Example}
\theoremstyle{nonumberplain}
\newtheorem{remark}{Remark}
\theoremstyle{nonumberbreak}
\newtheorem{proof}{Proof}
\newcommand{\R}{\mathbbm R}
\newcommand{\N}{\mathbbm N}
\newcommand{\Ord}{\mathcal O}
\newcommand{\e}{\mathrm e}
\renewcommand{\d}{\,\mathrm d}
\DeclareMathOperator*{\argmin}{arg\,min}
\begin{document}

\maketitle
\begin{center}
\parbox[t]{0.49\textwidth}{\footnotesize
\hspace*{-1ex}$^1$Computational Science Center\\
University of Vienna\\
Oskar-Morgenstern-Platz 1\\
A-1090 Vienna, Austria\\[1em]
\hspace*{-1ex}$^3$Johann Radon Institute for Computational\\
\hspace*{1em}and Applied Mathematics (RICAM)\\
Altenbergerstra{\ss}e 69\\
A-4040 Linz, Austria
}
\hfill
\parbox[t]{0.49\textwidth}{\footnotesize
\hspace*{-1ex}$^2$Department of Computational and Applied\\
\hspace*{1em}Mathematics and\\
Department of Earth Science\\
Rice University\\
6100 Main Street\\
Houston, TX 77005, USA}
\end{center}

\begin{abstract}
In this paper, we prove optimal convergence rates results for regularisation methods for solving linear ill-posed 
operator equations in Hilbert spaces. The result generalises existing convergence rates results on optimality 
of~\cite{Neu97} to general source conditions, such as logarithmic source conditions. Moreover, we also 
provide optimality results under variational source conditions, extending the results of~\cite{AndElbHooQiuSch15},
and show the connection to approximative source conditions, introduced in~\cite{HofMat07}.
\end{abstract}

\section{Introduction}

Let $L:X\to Y$ be a bounded linear operator between two Hilbert spaces $X$ and $Y$. We are interested in finding the minimum-norm solution $x^\dag\in X$ of the equation
\[ Lx = y \]
for some $y\in\mathcal R(L)$, that is the element $x^\dag\in\{x\in X\mid Lx=y\}$ with the property $\|x^\dag\|=\inf\{\|x\|\mid Lx=y\}$. It is well-known that this minimal-norm solution exists and is unique, see for example \cite[Theorem 2.5]{EngHanNeu96}.

Since $y$ is typically not exactly know, but only an approximation $\tilde y\in Y$ with $\|y-\tilde y\|\le\delta$ is given, we are looking for a family $(x_\alpha(\tilde y))_{\alpha\ge0}$ of approximative solutions so that for every sequence $(\tilde y_k)_{k\in\N}$ converging to $y$, we find a sequence $(\alpha_k)_{k\in\N}$ of regularisation parameters such that $(x_{\alpha_k}(\tilde y_k))_{k\in\N}$ tends to the minimum-norm solution $x^\dag$.

A standard way to construct this family is by using Tikhonov regularisation:
\[ x_\alpha^{\textrm{Tik}}(\tilde y) = \argmin_{x\in X}(\|Lx-\tilde y\|^2+\alpha\|x\|^2), \]
where the minimiser can be explicitly calculated from the optimality condition and reads as follows:
\begin{equation}\label{eqTikReg}
x_\alpha^{\textrm{Tik}}(\tilde y) = (\alpha+L^*L)^{-1}L^*\tilde y.
\end{equation}

More generally, we want to analyse regularised solutions of the form
\begin{equation}
\label{eq:reg}
 x_\alpha(\tilde y) = r_\alpha(L^*L)L^* \tilde y
\end{equation}
with some appropriately chosen function $r_\alpha$, see for example~\cite{HofMat07}.

The aim of this paper is then to characterise for a given regularisation method, generated by a family $(r_\alpha)_{\alpha>0}$, the 
optimal convergence rate with which $x_\alpha(\tilde y)$ tends to the minimum-norm solution $x^\dag$. 
This convergence rate depends on the solution $x^\dag$, and we will give an explicit relation between the spectral projections 
of $x^\dag$ with respect to the operator $L^*L$ and the convergence rate; first in \autoref{seConvRateExact} for the convergence of $x_\alpha(y)$ with the exact data~$y$, and then in \autoref{seConvRateNoisy} for $x_\alpha(\tilde y)$ with noisy data $\tilde y$. This generalises existing convergence rates results of~\cite{Neu97} to general source conditions, such as logarithmic source conditions.

Afterwards, we show in \autoref{seVarIneq} that these convergence rates can also be obtained from variational inequalities and establish the optimality of these general variational source conditions, extending the results of \cite{AndElbHooQiuSch15}.
It is interesting to note that variational source conditions are equivalent to convergence rates 
of the regularised solutions, while the classical results in~\cite{Gro84} are not. 

Finally, we consider in \autoref{seApproxSourcCond} approximate source conditions, which relate the convergence rates of the regularised solutions to the decay rate of a distance function measuring how far away the minimum-norm solution is from the classical range condition, see \cite{HofMat07,FleHofMat11}. We can show that these approximate source conditions are indeed equivalent to the convergence rates.


\section{Convergence Rates for Exact Data}\label{seConvRateExact}
In the following, we analyse the convergence rate of the sequence $(x_\alpha(y))_{\alpha>0}$ with the exact data 
$y\in\mathcal R(Y)$ to the minimum-norm solution $x^\dag$ of $Lx=y$. 

We investigate regularisation methods of the form \eqref{eq:reg}, which are generated by functions satisfying the following properties.
\begin{definition}\label{deGenerator}
We call a family $(r_\alpha)_{\alpha>0}$ of continuous functions $r_\alpha:[0,\infty)\to[0,\infty)$ the 
generator of a regularisation method if
\begin{enumerate}
\item \label{enGeneratorBounded} 
      there exists a constant $\rho\in(0,1)$ such that
      \[ r_\alpha(\lambda)\le\min\left\{\frac1\lambda,\frac\rho{\sqrt{\alpha\lambda}}\right\}
      \quad\text{for every}\quad\lambda>0,\;\alpha>0, \]
\item \label{enGeneratorError}
      the error function $\tilde r_\alpha:[0,\infty)\to[0,\infty)$, defined by
      \begin{equation}\label{eqGeneratorError}
      \tilde r_\alpha(\lambda)=(1-\lambda r_\alpha(\lambda))^2,\quad\lambda\ge0,
      \end{equation}
      is decreasing.
\item \label{enGeneratorErrorReg}
      For fixed $\lambda\ge0$ the map $\alpha\mapsto\tilde r_\alpha(\lambda)$ is continuous and increasing, and 
\item \label{enGeneratorLimit}
      there exists a constant $\tilde\rho\in(0,1)$ such that
      \[ \tilde r_\alpha(\alpha)<\tilde\rho\quad\text{for all}\quad\alpha>0. \]
\end{enumerate}
\end{definition}

\begin{remark}
These conditions do not yet enforce that $x_\alpha(y)\to x^\dag$. To ensure this, we could additionally impose that $\tilde r_\alpha(\lambda)\to0$ for every $\lambda>0$ as $\alpha\to0$.
\end{remark}

Let us now fix the notation for the rest of the article.
\begin{notation}\label{noSetting}
 Let $L:X\to Y$ be a bounded linear operator between two real Hilbert spaces $X$ and $Y$, $y\in\mathcal R(Y)$, 
 and $x^\dag\in X$ be the minimum-norm solution of $Lx=y$. 

Moreover, we choose a generator $(r_\alpha)_{\alpha>0}$ of a regularisation method, introduce the family 
$(\tilde r_\alpha)_{\alpha>0}$ of its error functions, and the corresponding family of regularised solutions shall be given by 
\eqref{eq:reg}.

We denote by $A\mapsto E_A$ and $A\mapsto F_A$ the spectral measures of the operators $L^*L$ and $LL^*$, respectively, on all Borel sets $A \subseteq [0,\infty)$.

Next, we define the right-continuous and increasing function 
        \begin{equation}
         \label{eqSpectralFunction}
         \begin{aligned}
            e:[0,\infty) &\to \R\,,\\
              \lambda &\mapsto \|E_{[0,\lambda]}x^\dag\|^2.
         \end{aligned}
        \end{equation}

Moreover, if $f:(0,\infty)\to\R$ is a right-continuous, increasing, and bounded function, we write  
\[ \int_a^bg(\lambda)\d f(\lambda) = \int_{(a,b]}g(\lambda)\d\mu_f(\lambda) \]
for the Lebesgue--Stieltjes integral of $f$, where $\mu_f$ denotes the unique non-negative Borel measure defined by $\mu_f((\lambda_1,\lambda_2])=f(\lambda_2)-f(\lambda_1)$ and $g\in L^1(\mu)$.
\end{notation}

\begin{remark}
In this setting, we can write the error
\begin{equation}
 \label{eqErrorExact}
 x_\alpha(y)-x^\dag= r_\alpha(L^*L)L^*y - x^\dag = (r_\alpha(L^*L)L^*L-I)x^\dag
\end{equation}
according to spectral theory in the form
\begin{equation}\label{eqCrResidum} 
\|x_\alpha(y)-x^\dag\|^2 = \int_0^{\|L\|^2}\tilde r_\alpha(\lambda)\d e(\lambda).
\end{equation}
We want to point out here that it follows directly from the definition that the minimum-norm solution $x^\dag$ is in the orthogonal complement $\mathcal N(L)^\perp$ of the nullspace of $L$, and we therefore do not have to consider the point $\lambda=0$ in the integrals in equation \eqref{eqCrResidum}.
\end{remark}

We first want to establish a relation between the convergence rate of the regularised solution $x_\alpha(y)$ for exact data $y$ to the minimum-norm solution $x^\dag$ and the behaviour of the spectral function \eqref{eqSpectralFunction}.
\begin{proposition}\label{thCr}
We use \autoref{noSetting} and assume that there exist an increasing function $\varphi:(0,\infty)\to(0,\infty)$ and constants $\mu\in(0,1)$ and $A>0$ such that we have for every $\alpha > 0$ the inequality 
\begin{equation}
\label{eqQualification}
\varphi(\lambda)\tilde r_\alpha^\mu(\lambda) \le A\varphi(\alpha)\quad\text{for all}\quad\lambda>0.
\end{equation}

Then, the following two statements are equivalent:
\begin{enumerate}
 \item There exists a constant $C>0$ with
       \begin{equation}\label{eqCr}
         \|x_\alpha(y)-x^\dag\|^2 \le C\varphi(\alpha)\quad\text{for all}\quad\alpha>0.
       \end{equation}
 \item There exists a constant $\tilde C>0$ with
       \begin{equation}\label{eqCrSpectralDecay}
          e(\lambda) \le\tilde C\varphi(\lambda)\quad\text{for all}\quad\lambda>0.
       \end{equation}
\end{enumerate}
\end{proposition}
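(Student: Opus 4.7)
The plan is to exploit the spectral representation~\eqref{eqCrResidum} and translate each of the two statements into the other by estimating the weight $\tilde r_\alpha(\lambda)$ inside the integral.

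For the direction (i)~$\Rightarrow$~(ii), the key observation is that $\tilde r_\alpha$ is decreasing in $\lambda$. Since $e(0)=0$ (because $x^\dag\in\mathcal N(L)^\perp$), this gives for any $\lambda>0$ the chain
\[
\tilde r_\alpha(\lambda)\,e(\lambda) \le \int_0^\lambda \tilde r_\alpha(\mu)\d e(\mu) \le \int_0^{\|L\|^2}\tilde r_\alpha(\mu)\d e(\mu) = \|x_\alpha(y)-x^\dag\|^2 \le C\varphi(\alpha).
\]
I would then specialise to $\alpha=\lambda$ and extract a lower bound on $\tilde r_\lambda(\lambda)$ from the inequality $r_\alpha(\lambda)\le\rho/\sqrt{\alpha\lambda}$ in~\autoref{deGenerator}: at $\alpha=\lambda$ this gives $\lambda r_\lambda(\lambda)\in[0,\rho]$, whence $\tilde r_\lambda(\lambda)\ge(1-\rho)^2$. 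Choosing $\tilde C=C/(1-\rho)^2$ then yields~\eqref{eqCrSpectralDecay}.

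For the converse (ii)~$\Rightarrow$~(i), I would split the integral at $\lambda=\alpha$. On the lower range, the bound $r_\alpha(\lambda)\le1/\lambda$ gives $\tilde r_\alpha\le1$, so by~\eqref{eqCrSpectralDecay},
\[
\int_0^\alpha\tilde r_\alpha(\lambda)\d e(\lambda)\le e(\alpha)\le\tilde C\varphi(\alpha).
\]
On the upper range, the qualification~\eqref{eqQualification} rearranges to $\tilde r_\alpha(\lambda)\le(A\varphi(\alpha)/\varphi(\lambda))^{1/\mu}$, and therefore
\[
\int_\alpha^{\|L\|^2}\tilde r_\alpha(\lambda)\d e(\lambda)\le(A\varphi(\alpha))^{1/\mu}\int_\alpha^{\|L\|^2}\varphi(\lambda)^{-1/\mu}\d e(\lambda).
\]
To bound this remaining integral without imposing any smoothness on $\varphi$, I would write $\varphi(\lambda)^{-1/\mu}=\tfrac{1}{\mu}\int_{\varphi(\lambda)}^\infty u^{-1/\mu-1}\d u$ and exchange the order of integration by Fubini. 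The inner region $\{\lambda\in(\alpha,\|L\|^2]:\varphi(\lambda)<u\}$ is empty for $u\le\varphi(\alpha)$, and on it $e(\lambda)\le\tilde Cu$ by~\eqref{eqCrSpectralDecay}; evaluating the resulting one-dimensional integral in $u$ then produces $\tilde C\varphi(\alpha)^{1-1/\mu}/(1-\mu)$ up to a numerical constant. Combining this with the prefactor $(A\varphi(\alpha))^{1/\mu}$ and the estimate from the lower range gives $\|x_\alpha(y)-x^\dag\|^2\le C\varphi(\alpha)$.

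The main obstacle is the lack of regularity of $\varphi$: a direct integration by parts would require differentiating $\varphi^{-1/\mu}$, which is why I route the estimate through the layer-cake identity above, which only uses that $\varphi$ is increasing. A secondary but essential point is that the exponent $1/\mu>1$ supplied by the qualification is necessary; the weaker bound $\tilde r_\alpha(\lambda)\le A\varphi(\alpha)/\varphi(\lambda)$ would produce a divergent logarithmic factor $\log(\varphi(\|L\|^2)/\varphi(\alpha))$ and destroy the rate.
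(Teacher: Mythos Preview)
Your argument is correct. The direction (i)~$\Rightarrow$~(ii) is identical to the paper's. For (ii)~$\Rightarrow$~(i) you take a genuinely different route: the paper integrates by parts against the measure $d(-\tilde r_\alpha)$ (using that $\tilde r_\alpha$ is continuous and decreasing), bounds $e(\lambda)\le\tilde C\varphi(\lambda)$ under the new integral, invokes the qualification in the form $\varphi(\lambda)\le A\varphi(\alpha)\tilde r_\alpha^{-\mu}(\lambda)$, and then evaluates $\int\tilde r_\alpha^{-\mu}\d(-\tilde r_\alpha)$ explicitly; the factor $\tilde r_\alpha^{1-\mu}(\alpha)$ that appears is controlled by the uniform bound $\tilde r_\alpha(\alpha)<\tilde\rho$ from \autoref{deGenerator}~\ref{enGeneratorLimit}. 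You instead keep the measure $\d e$, bound $\tilde r_\alpha$ pointwise via the qualification, and handle $\int\varphi^{-1/\mu}\d e$ by a layer-cake/Fubini argument.

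Two comments on what each approach buys. First, your route never uses \autoref{deGenerator}~\ref{enGeneratorLimit}, so you have in fact proved the implication under slightly weaker hypotheses on the generator; your final constant is $\tilde C\bigl(1+\tfrac{A^{1/\mu}}{1-\mu}\bigr)$ versus the paper's constant which carries a factor $\tilde\rho^{1-\mu}$. Second, your remark that ``a direct integration by parts would require differentiating $\varphi^{-1/\mu}$'' describes an obstacle on a hypothetical third path; the paper sidesteps it not by your layer-cake trick but by choosing $\tilde r_\alpha$ rather than $\varphi$ as the integrator, which is legitimate because $\tilde r_\alpha$ is continuous and monotone. Both devices exploit only the monotonicity of $\varphi$, as they must.

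A small point for the write-up: when you say ``on it $e(\lambda)\le\tilde C u$'', what you actually need is that the $\mu_e$-measure of the set $\{\lambda\in(\alpha,\|L\|^2]:\varphi(\lambda)<u\}$ is at most $\tilde C u$; this follows because the set is an interval $(\alpha,s)$ or $(\alpha,s]$ with $\varphi<u$ on it, so its measure is bounded by $\sup_{\lambda\in S_u}e(\lambda)\le\tilde C u$ via \eqref{eqCrSpectralDecay}. Making this explicit would tighten the exposition.
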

\begin{proof}
According to \autoref{deGenerator}~\ref{enGeneratorError} the error function $\tilde r_\alpha$ is decreasing and thus 
it follows together with \eqref{eqCrResidum} that for all $\alpha > 0$
\begin{equation}
 \label{eq:r_est}
 \tilde r_\alpha(\alpha) e(\alpha) = \tilde r_\alpha(\alpha) \int_0^\alpha \d e(\lambda) \le 
\int_0^\alpha\tilde r_\alpha(\lambda)\d e(\lambda) \le \|x_\alpha(y)-x^\dag\|^2\;.
\end{equation}
\begin{itemize}
 \item Let first \eqref{eqCr} hold. Then, it follows from \eqref{eq:r_est} that for all $\alpha > 0$ 
       \begin{equation}
        \label{eq:tilder}
        \tilde r_\alpha(\alpha) e(\alpha) \le C\varphi(\alpha).
       \end{equation}
       Now, we use \autoref{deGenerator}~\ref{enGeneratorBounded}, which gives that 
       \begin{equation*}
        \tilde r_\alpha(\alpha)=(1-\alpha r_\alpha(\alpha))^2\ge(1-\rho)^2>0.
       \end{equation*}
       Using this estimate in \eqref{eq:tilder} yields~\eqref{eqCrSpectralDecay} with $\tilde C = \frac{C}{(1-\rho)^2}>0$.
\item Conversely, let \eqref{eqCrSpectralDecay} hold. Since $\|x_\alpha(y)-x^\dag\|^2\le\|x^\dag\|^2$ (which follows from \eqref{eqCrResidum} with $\tilde r_\alpha\le1$), it is enough to check the condition~\eqref{eqCr} for all $\alpha\in(0,\|L\|^2]$.

      We use \eqref{eqCrResidum} and integrate the right hand side 
      by parts, see for example \cite[Theorem~6.2.2]{CarBru00} about the integration by parts for Lebesgue--Stieltjes integrals, and obtain that 
      \begin{equation}
      \label{eq:er}
              \|x_\alpha(y)-x^\dag\|^2 = \tilde r_\alpha(\|L\|^2)e(\|L\|^2) + \int_0^{\|L\|^2}e(\lambda)\d(-\tilde r_\alpha)(\lambda)
      \end{equation}
      We split up the integral on the right hand side into two terms:
      \begin{equation}
      \label{eq:split}
      \int_0^{\|L\|^2}e(\lambda)\d(-\tilde r_\alpha)(\lambda) = \int_0^\alpha e(\lambda)\d(-\tilde r_\alpha)(\lambda) + 
      \int_\alpha^{\|L\|^2}e(\lambda)\d(-\tilde r_\alpha)(\lambda).
      \end{equation}
      The first term is estimated by using that the function $e$ is increasing and by utilising the assumption \eqref{eqCrSpectralDecay}:
      \[ \int_0^\alpha e(\lambda)\d(-\tilde r_\alpha)(\lambda) \leq  e(\alpha) \int_0^\alpha \d(-\tilde r_\alpha)(\lambda) = e(\alpha) (1-\tilde r_\alpha(\alpha)) \le \tilde{C} \varphi(\alpha). \]
      The second integral term in \eqref{eq:split} is estimated by using the inequalities~\eqref{eqCrSpectralDecay} and~\eqref{eqQualification}:
      \begin{equation*}
      \begin{aligned}
      \int_\alpha^{\|L\|^2} e(\lambda)\d(-\tilde r_\alpha)(\lambda)
      &\leq \tilde{C} \int_\alpha^{\|L\|^2} \varphi(\lambda)\d(-\tilde r_\alpha)(\lambda)\\
      &= \tilde{C}
           \int_\alpha^{\|L\|^2}\varphi(\lambda)\tilde r_\alpha^\mu(\lambda)
           \frac1{\tilde r_\alpha^\mu(\lambda)}\d(- \tilde r_\alpha)(\lambda) \\
      &\le A\tilde{C} 
           \varphi(\alpha)\int_\alpha^{\|L\|^2}\frac1{\tilde r_\alpha^\mu(\lambda)}
           \d(-\tilde r_\alpha)(\lambda) \\
      &= \frac{A\tilde{C}}{1-\mu}\varphi(\alpha) 
          (\tilde r_\alpha^{1-\mu}(\alpha)-\tilde r_\alpha^{1-\mu}(\|L\|^2)) \\
      &\le \frac{A\tilde{C}\tilde\rho^{1-\mu}}{1-\mu}\varphi(\alpha),
      \end{aligned}
      \end{equation*}
      where we used \autoref{deGenerator}~\ref{enGeneratorLimit} in the last step.
      Inserting the two estimates in~\eqref{eq:split} and in~\eqref{eq:er}, we find with $e(\|L\|^2)=\|x^\dag\|^2$ that
      \begin{equation}
      \label{eq:af}
       \|x_\alpha(y)-x^\dag\|^2 \le \tilde r_\alpha(\|L\|^2)\|x^\dag\|^2 + \tilde C\varphi(\alpha) + \frac{A\tilde C\tilde\rho^{1-\mu}}{1-\mu}\varphi(\alpha).
      \end{equation}
	From \eqref{eqQualification}, we deduce further that
      \[ \tilde r_\alpha(\|L\|^2) \le 
         \frac{A^{\frac1\mu}}{\varphi^{\frac1\mu}(\|L\|^2)}\varphi^{\frac1\mu}(\alpha) 
         \le \frac{A^{\frac1\mu}\varphi^{\frac1\mu-1}(\alpha)}{\varphi^{\frac1\mu}(\|L\|^2)} \varphi(\alpha) \leq c\varphi(\alpha)\quad\text{with}\quad 
         c = \frac{A^{\frac1\mu}}{\varphi (\|L\|^2)}, \]
	since $\varphi$ is increasing and $\mu < 1$.

      Thus, we get from \eqref{eq:af} that 
      \[ \|x_\alpha(y)-x^\dag\|^2 \le C\varphi(\alpha) \]
      with $C = c \|x^\dag\|^2 + \tilde C + \frac{A\tilde C\tilde\rho^{1-\mu}}{1-\mu}$.
\end{itemize}
\end{proof}

\begin{remark}
The condition \eqref{eqQualification} with the choice $\mu=\frac12$ was already used in~\cite{FleHofMat11} and such a function~$\varphi$ was called a qualification of the regularisation method.
\end{remark}

\begin{example}\label{exTikhonov}
In the case of Tikhonov regularisation, given by~\eqref{eqTikReg}, we have $r_\alpha(\lambda)=\frac1{\alpha+\lambda}$ and therefore, we get for the error function $\tilde r_\alpha$, defined by \eqref{eqGeneratorError}, the expression $\tilde r_\alpha(\lambda)=\frac{\alpha^2}{(\alpha+\lambda)^2}$. So, clearly, $\tilde r_\alpha(\alpha)=\frac14$ and all the conditions of \autoref{deGenerator} are fulfilled.

\begin{enumerate}
\item\label{enTikhonovClassical}
To recover the classical equivalence results, see~\cite[Theorem 2.1]{Neu97}, we set $\varphi(\alpha)=\alpha^{2\nu}$ for some $\nu\in(0,1)$ and find that the condition \eqref{eqQualification} with $A=1$ is for every $\mu\ge\nu$ fulfilled, since we have
\[ \varphi(\lambda)\tilde r_\alpha^\mu(\lambda) = \frac{\alpha^{2\mu}\lambda^{2\nu}}{(\alpha+\lambda)^{2\mu}} \le \frac{\alpha^{2\mu-2\nu}}{(\alpha+\lambda)^{2\mu-2\nu}}\frac{\lambda^{2\nu}}{(\alpha+\lambda)^{2\nu}}\alpha^{2\nu} \le \alpha^{2\nu} = \varphi(\alpha) \]
for arbitrary $\alpha>0$ and $\lambda>0$.

Thus, \autoref{thCr} yields for every $\nu\in(0,1)$ the equivalence of $\|x_\alpha(y)-x^\dag\|^2=\Ord(\alpha^{2\nu})$ and $e(\lambda)=\Ord(\lambda^{2\nu})$.

\item\label{enTikhonovLog}
Similarly, we also get the equivalence in the case of logarithmic convergence rates. Let $0<\nu<\mu<1$ and define for $\alpha\in(0,\e^{-\frac\nu\mu}]$ the function $\varphi(\alpha) = \left|\log\alpha\right|^{-\nu}$ (for bigger values of $\alpha$, we may simply set $\varphi(\alpha)=\varphi(\e^{-\frac\nu\mu})$). Then, we have
\begin{align*}
(\varphi\tilde r_\alpha^\mu)'(\lambda) &= \frac{\alpha^{2\mu}}{(\alpha+\lambda)^{2\mu+1}\left|\log\lambda\right|^{\nu+1}}\left(\nu\frac{\alpha+\lambda}\lambda-2\mu\left|\log\lambda\right|\right) \\
&\le -\frac{2(\mu\left|\log\lambda\right|-\nu)\alpha^{2\mu}}{(\alpha+\lambda)^{2\mu+1}\left|\log\lambda\right|^{\nu+1}} \le 0
\end{align*}
for all $\lambda\in[\alpha,\e^{-\frac\nu\mu})$. Thus, $\varphi\tilde r_\alpha^\mu$ is decreasing on $[\alpha,\e^{-\frac\nu\mu})$, which implies \eqref{eqQualification} with $A=1$.

So, \autoref{thCr} tells us that $\|x_\alpha(y)-x^\dag\|^2=\Ord(\left|\log\alpha\right|^{-\nu})$ if and only if $e(\lambda)=\Ord(\left|\log\lambda\right|^{-\nu})$.
\end{enumerate}
\end{example}

\section{Convergence Rates for Noisy Data}\label{seConvRateNoisy}
We now want to estimate the distance of the regularised solution $x_\alpha(\tilde y)$ to the minimum-norm solution $x^\dag$ if we do not have the exact data $y$, but only some approximation $\tilde y$ of it.

In this case, we consider the regularisation parameter $\alpha$ as a function of the noisy data~$\tilde y$ such that the distance between $x_\alpha(\tilde y)$ and $x^\dag$ is minimal. Thus, we are interested in the convergence rate of the expression $\inf_{\alpha>0}\|x_\alpha(\tilde y)-x^\dag\|$ to zero as the distance between $\tilde y$ and $y$ tends to zero. We therefore want to find an upper bound for the expression $\sup_{\tilde y\in\bar B_\delta(y)}\inf_{\alpha>0}\|x_\alpha(\tilde y)-x^\dag\|$, where $\bar B_\delta(y)=\{\tilde y\in Y\mid \|\tilde y-y\|\le\delta\}$ denotes the closed ball with radius $\delta>0$ around the data $y$.

Let us first consider the trivial case where $\|x_\alpha(y)-x^\dag\|=0$ for all $\alpha$ in a vicinity of $0$.
\begin{lemma}\label{thConvRateEstimateTrivial}
We use \autoref{noSetting} and assume that there exists an $\varepsilon>0$ such that
\[ \|x_\alpha(y)-x^\dag\|=0 \quad\text{for all}\quad\alpha\in(0,\varepsilon]. \]
Then, we have
\begin{equation}\label{eqConvRateEstimateTrivial}
\sup_{\tilde y\in\bar B_\delta(y)}\inf_{\alpha>0}\|x_\alpha(\tilde y)-x^\dag\|^2 \le \frac{\rho^2}\varepsilon\delta^2,
\end{equation}
where $\rho>0$ is chosen as in \autoref{deGenerator}~\ref{enGeneratorBounded}.
\end{lemma}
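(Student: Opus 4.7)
The plan is to decompose the error and exploit the bound on $r_\alpha$ from \autoref{deGenerator}~\ref{enGeneratorBounded} to estimate the data-noise contribution, while the approximation error vanishes on $(0,\varepsilon]$ by assumption.

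First I would write, for arbitrary $\alpha>0$ and $\tilde y \in \bar B_\delta(y)$, the splitting
\[
 x_\alpha(\tilde y)-x^\dag = r_\alpha(L^*L)L^*(\tilde y-y) + \bigl(x_\alpha(y)-x^\dag\bigr).
\]
Restricting attention to $\alpha \in (0,\varepsilon]$ kills the second summand by hypothesis, so the task reduces to bounding $\|r_\alpha(L^*L)L^*(\tilde y-y)\|$.

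For that bound I would use the spectral theorem applied to the self-adjoint operator $L^*L$, together with the identity $\|r_\alpha(L^*L)L^*v\|^2 = \langle v, LL^* r_\alpha(LL^*)^2 v\rangle$ (obtained from $L^*L \, r_\alpha(L^*L)^2 = r_\alpha(L^*L)^2 \, L^*L$ and a transfer to $LL^*$ via the polynomial identity $Lp(L^*L)=p(LL^*)L$), to get
\[
 \|r_\alpha(L^*L)L^*(\tilde y-y)\|^2 = \int_0^{\|L\|^2} \lambda\, r_\alpha(\lambda)^2 \d\|F_{[0,\lambda]}(\tilde y-y)\|^2.
\]
The pointwise bound $r_\alpha(\lambda)\le \rho/\sqrt{\alpha\lambda}$ from \autoref{deGenerator}~\ref{enGeneratorBounded} gives $\lambda\, r_\alpha(\lambda)^2 \le \rho^2/\alpha$, so the integral is majorised by $(\rho^2/\alpha)\|\tilde y-y\|^2 \le \rho^2\delta^2/\alpha$.

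Finally, I would specialise $\alpha=\varepsilon$, which is admissible by the hypothesis, and conclude
\[
 \inf_{\alpha>0}\|x_\alpha(\tilde y)-x^\dag\|^2 \le \|x_\varepsilon(\tilde y)-x^\dag\|^2 \le \frac{\rho^2}{\varepsilon}\delta^2,
\]
uniformly in $\tilde y\in\bar B_\delta(y)$, which is \eqref{eqConvRateEstimateTrivial}. There is no real obstacle here; the only point that needs a line of justification is the functional-calculus identity used to move $r_\alpha$ from $L^*L$ to $LL^*$ so that the operator-norm bound $\|\sqrt{LL^*}\,r_\alpha(LL^*)\|\le \rho/\sqrt{\alpha}$ becomes visible.
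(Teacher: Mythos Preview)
Your proof is correct and is essentially the same as the paper's: both split $x_\alpha(\tilde y)-x^\dag$ into the approximation error $x_\alpha(y)-x^\dag$ and the propagated data error $r_\alpha(L^*L)L^*(\tilde y-y)$, use the functional-calculus identity $Lr_\alpha(L^*L)=r_\alpha(LL^*)L$ together with $\lambda r_\alpha(\lambda)^2\le\rho^2/\alpha$ to bound the latter by $\rho^2\delta^2/\alpha$, and then set $\alpha=\varepsilon$. The only cosmetic difference is that the paper phrases the splitting via the triangle inequality whereas you observe directly that the approximation term is exactly zero on $(0,\varepsilon]$.
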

\begin{proof}
Let $\tilde y\in\bar B_\delta(y)$ be fixed. Then, using that $Lr_\alpha(L^*L)=r_\alpha(LL^*)L$, it follows from \autoref{deGenerator}~\ref{enGeneratorBounded} that
\begin{equation}\label{eqDistRegSol}
\|x_\alpha(\tilde y)-x_\alpha(y)\|^2 = \left<\tilde y-y,r_\alpha^2(LL^*)LL^*(\tilde y-y)\right> \le \delta^2\max_{\lambda>0}\lambda r_\alpha^2(\lambda) \le \rho^2\frac{\delta^2}\alpha.
\end{equation}
The right hand side is uniform for all $\tilde y\in\bar B_\delta(y)$. Thus, picking $\alpha=\varepsilon$, we get
\[ \sup_{\tilde y\in\bar B_\delta(y)}\inf_{\alpha>0}\|x_\alpha(\tilde y)-x^\dag\|^2 \le \inf_{\alpha>0}\left(\|x_\alpha(y)-x^\dag\|+\rho\frac\delta{\sqrt{\alpha}}\right)^2 \le \frac{\rho^2}\varepsilon\delta^2, \]
which is \eqref{eqConvRateEstimateTrivial}.
\end{proof}

In the general case, we estimate the optimal regularisation parameter $\alpha$ to be in the vicinity of the value~$\alpha_\delta$, which is chosen as the solution of the implicit equation \eqref{eqConvRateOptRegParam} and is therefore only depending on the distance $\delta$ between the correct data $y$ and the noisy data $\tilde y$.

\begin{lemma}\label{thConvRateEstimate}
We use again \autoref{noSetting} and consider the case where $\|x_\alpha(y)-x^\dag\|>0$ for all $\alpha>0$. 

If we choose for every $\delta>0$ the parameter $\alpha_\delta>0$ such that
\begin{equation}\label{eqConvRateOptRegParam}
\alpha_\delta\|x_{\alpha_\delta}(y)-x^\dag\|^2 = \delta^2,
\end{equation}
then there exists a constant $C_1>0$ such that
\begin{equation}\label{eqConvRateUpperBound}
\sup_{\tilde y\in\bar B_\delta(y)}\inf_{\alpha>0}\|x_\alpha(\tilde y)-x^\dag\|^2\le C_1\frac{\delta^2}{\alpha_\delta}\quad\text{for all}\quad\delta>0.
\end{equation}

Moreover, there exists a constant $C_0>0$ such that
\begin{equation}\label{eqConvRateLowerBound}
\sup_{\tilde y\in\bar B_\delta(y)}\inf_{\alpha>0}\|x_\alpha(\tilde y)-x^\dag\|^2 \ge C_0\frac{\delta^2}{\alpha_\delta}
\end{equation}
for all $\delta>0$ which fulfil that $\alpha_\delta\in\sigma(LL^*)$, where $\sigma(LL^*)\subset[0,\infty)$ denotes the spectrum of the operator $LL^*$. 
\end{lemma}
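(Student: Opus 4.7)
The upper bound follows by evaluating at $\alpha = \alpha_\delta$ and applying the triangle inequality: the estimate~\eqref{eqDistRegSol} from the proof of \autoref{thConvRateEstimateTrivial} gives $\|x_{\alpha_\delta}(\tilde y) - x_{\alpha_\delta}(y)\| \le \rho\delta/\sqrt{\alpha_\delta}$, while~\eqref{eqConvRateOptRegParam} yields $\|x_{\alpha_\delta}(y) - x^\dag\| = \delta/\sqrt{\alpha_\delta}$. Adding and squaring produces~\eqref{eqConvRateUpperBound} uniformly in $\tilde y \in \bar B_\delta(y)$ with $C_1 = (1+\rho)^2$.

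The lower bound requires an adversarial perturbation of the form $\tilde y = y + \delta w$ where $w \in Y$ is a unit vector spectrally concentrated near $\alpha_\delta$. Since $\alpha_\delta \in \sigma(LL^*)$, the projection $F_{(\alpha_\delta - \varepsilon, \alpha_\delta + \varepsilon]}$ has non-trivial range for every $\varepsilon > 0$, so I would pick $w$ of norm one in this range and, by replacing $w$ with $-w$ if needed, arrange $\langle y, w\rangle \le 0$. Writing $B_\alpha := x_\alpha(y) - x^\dag$ and exploiting $r_\alpha(L^*L) L^* = L^* r_\alpha(LL^*)$, the squared error reads
\[ \|x_\alpha(\tilde y) - x^\dag\|^2 = \|B_\alpha\|^2 + 2\delta\langle B_\alpha, L^* r_\alpha(LL^*) w\rangle + \delta^2\|L^* r_\alpha(LL^*) w\|^2, \]
and a spectral-theorem computation based on $L f(L^*L) = f(LL^*) L$ rewrites the cross term as $-\int (1 - \lambda r_\alpha(\lambda)) r_\alpha(\lambda)\,\d\langle F_\lambda y, w\rangle$. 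When $\alpha_\delta$ is an eigenvalue of $LL^*$ and $w$ is taken in its eigenspace, this integral collapses to $-(1 - \alpha_\delta r_\alpha(\alpha_\delta)) r_\alpha(\alpha_\delta)\langle y, w\rangle$, which is non-negative for every $\alpha>0$: indeed $\lambda r_\alpha(\lambda)\in[0,1]$ by \autoref{deGenerator}\ref{enGeneratorBounded}, and $\langle y, w\rangle \le 0$ by construction.

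With the cross term non-negative, a case split on $\alpha$ finishes the argument. For $\alpha \ge \alpha_\delta$, the monotonicity of $\alpha \mapsto \tilde r_\alpha(\lambda)$ from \autoref{deGenerator}\ref{enGeneratorErrorReg} together with~\eqref{eqConvRateOptRegParam} gives $\|B_\alpha\|^2 \ge \|B_{\alpha_\delta}\|^2 = \delta^2/\alpha_\delta$. For $\alpha < \alpha_\delta$, the same monotonicity combined with \autoref{deGenerator}\ref{enGeneratorLimit} yields $\tilde r_\alpha(\alpha_\delta) \le \tilde r_{\alpha_\delta}(\alpha_\delta) < \tilde\rho$, hence $\alpha_\delta r_\alpha(\alpha_\delta) > 1 - \sqrt{\tilde\rho}$, so that $\delta^2\|L^* r_\alpha(LL^*) w\|^2 = \delta^2 \alpha_\delta r_\alpha^2(\alpha_\delta) \ge (1-\sqrt{\tilde\rho})^2 \delta^2/\alpha_\delta$. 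Both ranges deliver $\|x_\alpha(\tilde y)-x^\dag\|^2 \ge (1-\sqrt{\tilde\rho})^2\delta^2/\alpha_\delta$, and $C_0 = (1-\sqrt{\tilde\rho})^2$ works.

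The main obstacle is that when $\alpha_\delta$ lies only in the continuous spectrum of $LL^*$, $w$ is merely an approximate eigenvector: the non-negativity of the cross term and the identity $\|L^* r_\alpha(LL^*) w\|^2 = \alpha_\delta r_\alpha^2(\alpha_\delta)$ hold only up to errors depending on $\alpha$, and the transition region $\alpha \in (\alpha_\delta - \varepsilon, \alpha_\delta]$ is particularly delicate because neither the bias nor the variance bound is sharp there. The remedy is a Weyl-sequence argument: for $w_n \in \mathcal R(F_{(\alpha_\delta - 1/n, \alpha_\delta + 1/n]})$, the cross-term error is bounded uniformly in $\alpha$ by $\|F_{(\alpha_\delta - 1/n, \alpha_\delta + 1/n]} y\|/(\alpha_\delta - 1/n)$, which vanishes as $n \to \infty$ whenever $\alpha_\delta$ is not a point mass of the spectral measure of $y$ (and any atomic contribution is absorbed by the eigenvalue case above); sending $n \to \infty$ then recovers the same $C_0$ up to an arbitrarily small loss.
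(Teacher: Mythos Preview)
Your upper bound is identical to the paper's, including $C_1=(1+\rho)^2$. For the lower bound, however, the paper bypasses the eigenvalue/continuous-spectrum dichotomy through a different choice of perturbation. Using continuity of $\tilde r_{\alpha_\delta}$ and $\tilde r_{\alpha_\delta}(\alpha_\delta)<\tilde\rho$, it fixes $a_\delta\in(0,\alpha_\delta)$ with $\tilde r_{\alpha_\delta}(a_\delta)<\tilde\rho$, works on the \emph{fixed} spectral window $I=[a_\delta,2\alpha_\delta]$ (nontrivial because $\alpha_\delta\in\sigma(LL^*)$), and takes $\tilde y-y$ proportional to $z_\delta=F_I\bigl(r_{\alpha_\delta}(LL^*)LL^*y-y\bigr)$, or to any nonzero element of $\mathcal R(F_I)$ when $z_\delta=0$. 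The cross term in~\eqref{eqConvRateError} then has spectral integrand $r_\alpha(\lambda)\bigl(1-\lambda r_{\alpha_\delta}(\lambda)\bigr)\bigl(1-\lambda r_\alpha(\lambda)\bigr)\ge0$ and is therefore non-negative for \emph{every} $\alpha>0$ simultaneously---no sign convention on $\langle y,w\rangle$, no Weyl sequence, no limit. The two-way split on $\alpha\gtrless\alpha_\delta$ then runs directly over the fixed window~$I$, yielding $C_0=\tfrac12(1-\sqrt{\tilde\rho})^2$. What your approach buys is a slightly better constant in the eigenvalue case; what the paper's approach buys is a uniform treatment of the whole spectrum in one stroke.

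Your eigenvalue argument is correct and the Weyl-sequence idea can be completed, but the sketch leaves the transition range $\alpha\in(\alpha_\delta-1/n,\alpha_\delta)$ unresolved: there part of $I_n$ lies below $\alpha$, so the variance bound $\lambda r_\alpha^2(\lambda)\ge(1-\sqrt{\tilde\rho})^2/\lambda$ is unavailable on that portion, while the bias bound $\|B_\alpha\|^2\ge\delta^2/\alpha_\delta$ also fails since $\alpha<\alpha_\delta$. A third case using continuity, namely $\|B_\alpha\|^2\ge\|B_{\alpha_\delta-1/n}\|^2\to\delta^2/\alpha_\delta$, is needed before passing to the limit in $n$. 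You also omit the paper's preliminary step showing that $\alpha\mapsto\alpha\|x_\alpha(y)-x^\dag\|^2$ is a continuous strictly increasing bijection of $(0,\infty)$ onto itself, which is what guarantees existence and uniqueness of $\alpha_\delta$ for every $\delta>0$.
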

\begin{proof}
First, we remark that the function
\[ A:(0,\infty)\to(0,\infty),\quad A(\alpha)=\alpha\|x_\alpha(y)-x^\dag\|^2=\int_0^{\|L\|^2}\alpha\tilde r_\alpha(\lambda)\d e(\lambda) \]
is, according to \autoref{deGenerator}~\ref{enGeneratorErrorReg} together with the assumption that $\|x_\alpha(y)-x^\dag\|>0$ for all $\alpha>0$, continuous and strictly increasing; and it satisfies $\lim_{\alpha\to0}A(\alpha)=0$ and $\lim_{\alpha\to\infty}A(\alpha)=\infty$.
Therefore, we find for every $\delta>0$ a unique value $\alpha_\delta=A^{-1}(\delta^2)$.

Let $\tilde y\in\bar B_\delta(y)$. Then, as in the proof of \autoref{thConvRateEstimateTrivial}, see~\eqref{eqDistRegSol}, we find that
\[ \|x_\alpha(\tilde y)-x_\alpha(y)\|^2 \le \rho^2\frac{\delta^2}\alpha. \]

From this estimate, we obtain with the triangular inequality and with the definition~\eqref{eqConvRateOptRegParam} of~$\alpha_\delta$ that
\[ \sup_{\tilde y\in\bar B_\delta(y)}\inf_{\alpha>0}\|x_\alpha(\tilde y)-x^\dag\|^2 \le \inf_{\alpha>0}\left(\|x_\alpha(y)-x^\dag\|+\rho\frac\delta{\sqrt{\alpha}}\right)^2 \le (1+\rho)^2\frac{\delta^2}{\alpha_\delta}, \]
which is the upper bound \eqref{eqConvRateUpperBound} with the constant $C_1=(1+\rho)^2$.

For the lower bound \eqref{eqConvRateLowerBound}, we write similarly
\begin{equation}\label{eqConvRateError}
\begin{aligned}
\|x_\alpha(\tilde y)-x^\dag\|^2 &= 
\begin{multlined}[t][0.6\textwidth]
\|x_\alpha(y)-x^\dag\|^2+\|x_\alpha(\tilde y)-x_\alpha(y)\|^2 \\[1ex]
+2\left<x_\alpha(\tilde y)-x_\alpha(y),x_\alpha(y)-x^\dag\right>
\end{multlined} \\
&= 
\begin{multlined}[t][0.6\textwidth]
\|x_\alpha(y)-x^\dag\|^2+\left<\tilde y-y,r_\alpha^2(LL^*)LL^*(\tilde y-y)\right> \\[1ex]
+2\left<r_\alpha(LL^*)(\tilde y-y),r_\alpha(LL^*)LL^*y-y\right>.
\end{multlined}
\end{aligned}
\end{equation}

Now, from the continuity of $\tilde r_{\alpha_\delta}$ and \autoref{deGenerator}~\ref{enGeneratorLimit}, we find that for every $\delta>0$ 
there exists a parameter $a_\delta\in(0,\alpha_\delta)$ such that $\tilde r_{\alpha_\delta}(a_\delta)<\tilde\rho$.

Then, the assumption $\alpha_\delta\in\sigma(LL^*)$ implies that the spectral measure $F$ of the operator~$LL^*$ fulfils 
$F_{[a_\delta,2\alpha_\delta]}\ne0$.

Suppose now that
\begin{equation}\label{eqConvRateProj}
z_\delta = F_{[a_\delta,2\alpha_\delta]}(r_{\alpha_\delta}(LL^*)LL^*y-y) \ne 0.
\end{equation}

Then, choosing $\tilde y=y+\delta\frac{z_\delta}{\|z_\delta\|}$, the equation \eqref{eqConvRateError} becomes
\[ \|x_\alpha(\tilde y)-x^\dag\|^2 = \|x_\alpha(y)-x^\dag\|^2+\frac{\delta^2}{\|z_\delta\|^2}\left<z_\delta,r_\alpha^2(LL^*)LL^*z_\delta\right> \\
+\frac{2\delta}{\|z_\delta\|}\left<r_\alpha(LL^*)z_\delta,z_\delta\right>. \]
Thus, we may drop the last term as it is non-negative, which gives us the lower bound
\[ \sup_{\tilde y\in\bar B_\delta(y)}\inf_{\alpha>0}\|x_\alpha(\tilde y)-x^\dag\|^2 \ge \inf_{\alpha>0}\left(\|x_\alpha(y)-x^\dag\|^2+\delta^2\min_{\lambda\in[a_\delta,2\alpha_\delta]}\lambda r_\alpha^2(\lambda)\right). \]
Since we get from \autoref{deGenerator}~\ref{enGeneratorError} the inequality
\[ \lambda r_\alpha^2(\lambda)=\frac{\big(1-\sqrt{\tilde r_\alpha(\lambda)}\big)^2}\lambda \ge \frac{\big(1-\sqrt{\tilde r_\alpha(a_\delta)}\big)^2}{2\alpha_\delta}\quad\text{for all}\quad \lambda\in[a_\delta,2\alpha_\delta], \]
we can estimate further
\[ \sup_{\tilde y\in\bar B_\delta(y)}\inf_{\alpha>0}\|x_\alpha(\tilde y)-x^\dag\|^2 \ge \inf_{\alpha>0}\left(\|x_\alpha(y)-x^\dag\|^2+\delta^2\frac{\big(1-\sqrt{\tilde r_\alpha(a_\delta)}\big)^2}{2\alpha_\delta}\right). \]
Now, since $\alpha\mapsto\tilde r_\alpha(\lambda)$ is for every $\lambda>0$ increasing, see \autoref{deGenerator}~\ref{enGeneratorErrorReg}, the first term is increasing in $\alpha$, see \eqref{eqCrResidum}, and the second term is decreasing in $\alpha$. Thus, we can estimate the expression for $\alpha<\alpha_\delta$ from below by the second term at $\alpha=\alpha_\delta$, and for $\alpha\ge\alpha_\delta$ by the first term at $\alpha=\alpha_\delta$:
\[ \sup_{\tilde y\in\bar B_\delta(y)}\inf_{\alpha>0}\|x_\alpha(\tilde y)-x^\dag\|^2 \ge 
\min\left\{\|x_{\alpha_\delta}(y)-x^\dag\|^2,\delta^2\frac{\big(1-\sqrt{\tilde r_{\alpha_\delta}(a_\delta)}\big)^2}{2\alpha_\delta}\right\}
\ge \frac{(1-\sqrt{\tilde\rho})^2}2\frac{\delta^2}{\alpha_\delta}, \]
which is \eqref{eqConvRateLowerBound} with $C_0=\frac12(1-\sqrt{\tilde\rho})^2$.

If $z_\delta$, defined by \eqref{eqConvRateProj} happens to vanish, the same argument works with an arbitrary non-zero element $z_\delta\in\mathcal R(F_{[a_\delta,2\alpha_\delta]})$, since then the last term in \eqref{eqConvRateError} is zero for $\tilde y=y+\delta\frac{z_\delta}{\|z_\delta\|}$.
\end{proof}

From \autoref{thConvRateEstimateTrivial} and \autoref{thConvRateEstimate}, we now get an equivalence relation between the noisy and the noise-free convergence rates.

\begin{proposition}\label{thConvRateNoisy}
We use \autoref{noSetting}. Let further $\varphi:[0,\infty)\to[0,\infty)$ be a strictly increasing function satisfying $\varphi(0)=0$ and 
\begin{equation}\label{eqSubhomog}
\varphi(\gamma\alpha)\le g(\gamma)\varphi(\alpha)\quad\text{for all}\quad\alpha>0,\;\gamma>0
\end{equation}
for some increasing function $g:(0,\infty)\to(0,\infty)$.

Moreover, we assume that there exists a constant $C>0$ with
\begin{equation}\label{eqConvRateNoisyCond1}
\frac{\tilde r_\alpha(\lambda)}{\tilde r_\beta(\lambda)} \le C\frac{\varphi(\alpha)}{\varphi(\beta)}\quad\text{for all}\quad0<\alpha\le\beta\le\lambda
\end{equation}
and there is a constant $\tilde C$ such that
\begin{equation}\label{eqConvRateNoisyCond2}
\frac{\tilde r_\alpha(\lambda)}{\tilde r_\beta(\lambda)} \ge \tilde C\frac{\varphi(\alpha)}{\varphi(\beta)}\quad\text{for all}\quad0<\lambda\le\alpha\le\beta.
\end{equation}
We define 
\begin{equation}\label{eqConvRateNoisyFct}
\tilde\varphi(\alpha)=\sqrt{\alpha\varphi(\alpha)}\quad\text{and}\quad\psi(\delta)=\frac{\delta^2}{\tilde\varphi^{-1}(\delta)}.
\end{equation}

Then, the following two statements are equivalent:
\begin{enumerate}
\item
There exists a constant $c>0$ such that
\begin{equation}\label{eqConvRateNoisy}
\sup_{\tilde y\in\bar B_\delta(y)}\inf_{\alpha>0}\|x_\alpha(\tilde y)-x^\dag\|^2\le c\psi(\delta)\quad\text{for all}\quad\delta>0.
\end{equation}
\item
There exists a constant $\tilde c>0$ such that
\begin{equation}\label{eqConvRateNoisyNoisefree}
\|x_\alpha(y)-x^\dag\|^2\le \tilde c\varphi(\alpha)\quad\text{for all}\quad\alpha>0.
\end{equation}
\end{enumerate}
\end{proposition}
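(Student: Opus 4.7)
The strategy is to derive both implications from the tight two-sided bounds in \autoref{thConvRateEstimateTrivial} and \autoref{thConvRateEstimate}, which together show that the left-hand side of \eqref{eqConvRateNoisy} is, up to multiplicative constants, equal to $\delta^2/\alpha_\delta$, where $\alpha_\delta$ is the unique solution of $\alpha_\delta\|x_{\alpha_\delta}(y)-x^\dag\|^2=\delta^2$. The bridge between the two rates is the identity $\delta^2/\alpha_\delta=\|x_{\alpha_\delta}(y)-x^\dag\|^2$ read jointly with $\tilde\varphi(\alpha)^2=\alpha\varphi(\alpha)$: an estimate $\|x_\alpha(y)-x^\dag\|^2\le\tilde c\varphi(\alpha)$ is equivalent to $\delta\le\sqrt{\tilde c}\,\tilde\varphi(\alpha_\delta)$, and this in turn forces $\alpha_\delta\ge\tilde\varphi^{-1}(\delta/\sqrt{\tilde c})$, which I would like to upgrade to $\alpha_\delta\ge\kappa_0\tilde\varphi^{-1}(\delta)$ by means of the sub-homogeneity assumption \eqref{eqSubhomog}.

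For the forward direction, assume \eqref{eqConvRateNoisyNoisefree}. The subcase $\|x_\alpha(y)-x^\dag\|=0$ for $\alpha$ in some interval $(0,\varepsilon]$ is handled by \autoref{thConvRateEstimateTrivial}, whose $\delta^2$-bound is dominated by $c\psi(\delta)$ once $c$ is chosen to account for the bounded quantities $\tilde\varphi^{-1}(\delta)$ (at small $\delta$) and $\tilde\varphi^{-1}(\delta)/\delta^2\le 1/\varphi(1)$ (at large $\delta$). Otherwise \autoref{thConvRateEstimate} applies and gives $\sup_{\tilde y\in\bar B_\delta(y)}\inf_{\alpha>0}\|x_\alpha(\tilde y)-x^\dag\|^2\le C_1\delta^2/\alpha_\delta$. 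The hypothesis and the definition of $\alpha_\delta$ yield $\delta^2\le\tilde c\,\alpha_\delta\varphi(\alpha_\delta)=\tilde c\,\tilde\varphi(\alpha_\delta)^2$, hence $\alpha_\delta\ge\tilde\varphi^{-1}(\delta/\sqrt{\tilde c})$ by strict monotonicity of $\tilde\varphi$. Sub-homogeneity transfers to $\tilde\varphi$ via $\tilde\varphi(\gamma\alpha)^2=\gamma\alpha\varphi(\gamma\alpha)\le\gamma g(\gamma)\tilde\varphi(\alpha)^2$; since $g$ is increasing and hence bounded as $\gamma\to 0^+$, I can choose $\kappa_0>0$ with $\sqrt{\kappa_0 g(\kappa_0)}\le 1/\sqrt{\tilde c}$, which produces $\tilde\varphi^{-1}(\delta/\sqrt{\tilde c})\ge\kappa_0\tilde\varphi^{-1}(\delta)$ and thus \eqref{eqConvRateNoisy} with $c=C_1/\kappa_0$.

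For the reverse direction, assume \eqref{eqConvRateNoisy}. The lower bound of \autoref{thConvRateEstimate}, applied whenever $\alpha_\delta\in\sigma(LL^*)$, yields $\alpha_\delta\ge(C_0/c)\tilde\varphi^{-1}(\delta)$, equivalently $\delta\le\tilde\varphi((c/C_0)\alpha_\delta)$. Invoking the sub-homogeneity bound on $\tilde\varphi$ gives $\delta^2\le(c/C_0)g(c/C_0)\alpha_\delta\varphi(\alpha_\delta)$, and dividing by $\alpha_\delta$ produces $\|x_{\alpha_\delta}(y)-x^\dag\|^2\le(c/C_0)g(c/C_0)\varphi(\alpha_\delta)$. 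Since $\{\alpha_\delta:\delta>0\}=(0,\infty)$ by continuity of the map $A$ introduced in \autoref{thConvRateEstimate} and $\sigma(LL^*)\setminus\{0\}=\sigma(L^*L)\setminus\{0\}$, this establishes \eqref{eqConvRateNoisyNoisefree} for every $\alpha\in\sigma(L^*L)\setminus\{0\}$.

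Propagating this estimate to arbitrary $\alpha>0$ is the main obstacle, and this is where the two-sided scaling conditions \eqref{eqConvRateNoisyCond1}--\eqref{eqConvRateNoisyCond2} enter. For a given $\alpha>0$ I split $\|x_\alpha(y)-x^\dag\|^2=\int_0^{\|L\|^2}\tilde r_\alpha(\lambda)\d e(\lambda)$ at $\lambda=\alpha$. On the range $\lambda\ge\alpha$ I pick $\beta=\inf\{\mu\in\sigma(L^*L):\mu\ge\alpha\}$, so that $\alpha\le\beta\le\lambda$ for every $\lambda\ge\beta$ in the support of $\d e$, and \eqref{eqConvRateNoisyCond1} then gives $\tilde r_\alpha(\lambda)\le C\varphi(\alpha)\varphi(\beta)^{-1}\tilde r_\beta(\lambda)$; integrating, the contribution is at most $C\varphi(\alpha)\varphi(\beta)^{-1}\|x_\beta(y)-x^\dag\|^2$, which by the bound already known for $\beta\in\sigma(L^*L)$ is a constant multiple of $\varphi(\alpha)$. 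The range $\lambda<\alpha$ is treated symmetrically with $\beta=\sup\{\mu\in\sigma(L^*L):\mu\le\alpha\}$ and \eqref{eqConvRateNoisyCond2}. Degenerate cases, such as $\alpha>\|L\|^2$ or $\alpha$ strictly below the infimum of the nonzero spectrum, are absorbed into the final constant using the trivial bound $\|x_\alpha(y)-x^\dag\|^2\le\|x^\dag\|^2$ together with the fact that $\varphi$ is bounded below on any interval bounded away from $0$.
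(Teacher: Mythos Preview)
Your strategy is the paper's strategy: reduce to the two-sided estimate $\delta^2/\alpha_\delta$ from \autoref{thConvRateEstimate}, translate the noise-free rate into a bound on $\alpha_\delta$ via $\tilde\varphi$, and then extend the bound from $\alpha\in\sigma(LL^*)$ to arbitrary $\alpha$ by sandwiching with the neighbouring spectral points and invoking \eqref{eqConvRateNoisyCond1}--\eqref{eqConvRateNoisyCond2}. The scaling arguments you run through $\tilde\varphi$ and $\kappa_0$ are equivalent to the paper's derivation of the inequality $\psi(\tilde\gamma\delta)\le h(\tilde\gamma)\psi(\delta)$.

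One point needs correction. In your list of degenerate cases for the extension step you write that ``$\alpha$ strictly below the infimum of the nonzero spectrum'' is absorbed via the trivial bound $\|x_\alpha(y)-x^\dag\|^2\le\|x^\dag\|^2$ together with $\varphi$ being bounded below on intervals bounded away from $0$. That does not work: such $\alpha$ can be arbitrarily small, so $\varphi(\alpha)$ is not bounded below and you cannot dominate $\|x^\dag\|^2$ by $\tilde c\,\varphi(\alpha)$. The correct (and simpler) observation is that in this situation the measure $\d e$ has no mass on $(0,\alpha]$, so the ``lower'' integral is identically zero and the whole error is the ``upper'' piece, which your main argument with $\beta=\inf(\sigma(L^*L)\setminus\{0\})$ and \eqref{eqConvRateNoisyCond1} already covers. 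This is exactly how the paper handles it (``for $\alpha_-=0$, the first term \ldots\ simply vanishes''). Your treatment of $\alpha>\|L\|^2$ via the trivial bound is fine, since there $\varphi(\alpha)\ge\varphi(\|L\|^2)>0$.

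A smaller remark: your handling of the trivial subcase $\|x_\alpha(y)-x^\dag\|\equiv0$ on $(0,\varepsilon]$ is loosely worded for large $\delta$; the $\delta^2$-bound from \autoref{thConvRateEstimateTrivial} cannot be dominated by $c\psi(\delta)$ uniformly in $\delta$ (since $\psi(\delta)/\delta^2=1/\tilde\varphi^{-1}(\delta)\to0$). The clean fix is to note that the left-hand side of \eqref{eqConvRateNoisy} is always $\le\|x^\dag\|^2$ (let $\alpha\to\infty$), while $\psi(\delta)=\varphi(\tilde\varphi^{-1}(\delta))$ is increasing and hence bounded below on $[\delta_0,\infty)$. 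The paper is equally terse at this spot.
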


\begin{proof}
We first remark that \eqref{eqSubhomog} implies that $\tilde\varphi(\gamma\alpha)\le\sqrt{\gamma g(\gamma)}\tilde\varphi(\alpha)$, and so, by setting $\tilde g(\gamma)=\sqrt{\gamma g(\gamma)}$, $\delta=\tilde\varphi(\alpha)$, and $\tilde\gamma=\tilde g(\gamma)$, we get 
\[ \tilde g^{-1}(\tilde\gamma)\tilde\varphi^{-1}(\delta)\le\tilde\varphi^{-1}(\tilde\gamma\delta).\]
Thus, we have
\begin{equation}\label{eqSubhmogPsi}
\psi(\tilde\gamma\delta) = \frac{\tilde\gamma^2\delta^2}{\tilde\varphi^{-1}(\tilde\gamma\delta)} \le \frac{\tilde\gamma^2\delta^2}{\tilde g^{-1}(\tilde\gamma)\tilde\varphi^{-1}(\delta)} = h(\tilde\gamma)\psi(\delta)
\end{equation}
where $h(\tilde\gamma)=\frac{\tilde\gamma^2}{\tilde g^{-1}(\tilde\gamma)}$.

In the case where $\|x_\alpha(y)-x^\dag\|=0$ for all $\alpha\in(0,\varepsilon]$ for some $\varepsilon>0$, the inequality \eqref{eqConvRateNoisyNoisefree} is trivially fulfilled for some $\tilde c>0$. Moreover, we know from \autoref{thConvRateEstimateTrivial} that then the inequality \eqref{eqConvRateEstimateTrivial} holds, which implies the inequality \eqref{eqConvRateNoisy} for some constant $c>0$, since we have, according to the definition of the function $\psi$, that $\psi(\delta)\ge a\delta^2$ for all $\delta\in(0,\delta_0)$ for some constants $a>0$ and $\delta_0>0$.

Thus, we may assume that $\|x_\alpha(y)-x^\dag\|>0$ for all $\alpha>0$.
\begin{itemize}
\item
Let \eqref{eqConvRateNoisyNoisefree} hold. For arbitrary $\delta>0$ we use the regularisation parameter $\alpha_\delta$ 
defined in \eqref{eqConvRateOptRegParam}. Then, the inequality \eqref{eqConvRateNoisyNoisefree} implies that 
\[ \frac{\delta^2}{\alpha_\delta}\le \tilde c\varphi(\alpha_\delta). \]
Consequently,
\[ \tilde\varphi^{-1}\left(\frac\delta{\sqrt{\tilde c}}\right)\le\alpha_\delta, \]
and therefore, using the inequality \eqref{eqConvRateUpperBound} obtained in \autoref{thConvRateEstimate}, we find with \eqref{eqSubhmogPsi} that
\[ \sup_{\tilde y\in\bar B_\delta(y)}\inf_{\alpha>0}\|x_\alpha(\tilde y)-x^\dag\|^2
\le C_1\frac{\delta^2}{\alpha_\delta} \le C_1\tilde c\psi\left(\frac\delta{\sqrt{\tilde c}}\right) \le C_1\tilde ch(\tfrac1{\sqrt{\tilde c}})\psi(\delta), \]
which is the estimate \eqref{eqConvRateNoisy} with $c=C_1 \tilde ch(\frac1{\sqrt{\tilde c}})$.
\item 
Conversely, if \eqref{eqConvRateNoisy} holds, we choose an arbitrary $\delta>0$ such that $\alpha_\delta$ defined by \eqref{eqConvRateOptRegParam} is in the spectrum $\sigma(LL^*)$. Then, we can use the inequality~\eqref{eqConvRateLowerBound} of \autoref{thConvRateEstimate} to obtain from the condition~\eqref{eqConvRateNoisy} that
\[ C_0\frac{\delta^2}{\alpha_\delta}\le c\psi(\delta). \]
Thus, by the definition of $\psi$, we have
\[ \tilde\varphi^{-1}(\delta)\le \frac c{C_0}\alpha_\delta. \]
So, finally, we get with \eqref{eqSubhomog} that
\[ \|x_{\alpha_\delta}(y)-x^\dag\|^2 = \frac{\delta^2}{\alpha_\delta} \le \frac c{C_0}\varphi\left(\frac c{C_0}\alpha_\delta\right)\le\frac c{C_0}g(\tfrac c{C_0})\varphi(\alpha_\delta), \]
and since this holds for every $\delta$ such that $\alpha_\delta\in\sigma(LL^*)$, we have with $\hat c=\frac c{C_0}g(\tfrac c{C_0})$ that
\begin{equation}\label{eqConvRateNoisySpectrum}
\|x_\alpha(y)-x^\dag\|^2 \le \hat c\varphi(\alpha)\quad\text{for all}\quad\alpha\in\sigma(LL^*).
\end{equation}

Finally, we consider some $\alpha\notin\sigma(LL^*)$, $\alpha<\|L\|^2$, and set 
\begin{align*}
\alpha_-&=\sup\{\tilde\alpha\in\sigma(LL^*)\cup\{0\}\mid\tilde\alpha<\alpha\}\quad\text{and}\\
\alpha_+&=\inf\{\tilde\alpha\in\sigma(LL^*)\mid\tilde\alpha>\alpha\}.
\end{align*}
Then, recalling that $\sigma(L^*L)\setminus\{0\}=\sigma(LL^*)\setminus\{0\}$, see for example~\cite[Problem~61]{Hal74}, we find for $\alpha_->0$ (for $\alpha_-=0$, the first term in the following calculation simply vanishes) that
\begin{align*}
\|x_\alpha(y)-x^\dag\|^2 &= \int_0^{\alpha_-}\tilde r_\alpha(\lambda)\d e(\lambda)+\int_{\alpha_+}^{\|L\|^2}\tilde r_\alpha(\lambda)\d e(\lambda) \\
&\le\|x_{\alpha_-}(y)-x^\dag\|^2\sup_{\lambda\in[0,\alpha_-]}\frac{\tilde r_\alpha(\lambda)}{\tilde r_{\alpha_-}(\lambda)}+\|x_{\alpha_+}(y)-x^\dag\|^2\sup_{\lambda\in[\alpha_+,\|L\|^2]}\frac{\tilde r_\alpha(\lambda)}{\tilde r_{\alpha_+}(\lambda)}.
\end{align*}
Using the conditions \eqref{eqConvRateNoisyCond1} and \eqref{eqConvRateNoisyCond2}, we have with \eqref{eqConvRateNoisySpectrum} that
\[ \|x_\alpha(y)-x^\dag\|^2 \le \frac{\hat c}{\tilde C}\varphi(\alpha_-)\frac{\varphi(\alpha)}{\varphi(\alpha_-)}+C\hat c\varphi(\alpha_+)\frac{\varphi(\alpha)}{\varphi(\alpha_+)} = (C+\tfrac1{\tilde C})\hat c\varphi(\alpha), \]
which is \eqref{eqConvRateNoisyNoisefree} with $\tilde c=(C+\tfrac1{\tilde C})\hat c$.
\end{itemize}
\end{proof}

\begin{remark}
If we consider Tikhonov regularisation, then we can ignore the two conditions \eqref{eqConvRateNoisyCond1} and \eqref{eqConvRateNoisyCond2} in \autoref{thConvRateNoisy} if we have a quadratic upper bound on the function $g$ in \eqref{eqSubhomog}.

Indeed, let $\varphi:[0,\infty)\to[0,\infty)$ be an arbitrary increasing function fulfilling \eqref{eqSubhomog} for some increasing function $g:(0,\infty)\to(0,\infty)$ which is bounded by
\begin{equation}\label{eqConvRateNoisyCondTik}
g(\gamma)\le \frac C4(1+\gamma^2)\quad\text{for all}\quad\gamma>0
\end{equation}
for some constant $C>0$. Then, the conditions \eqref{eqConvRateNoisyCond1} and \eqref{eqConvRateNoisyCond2} are fulfilled for the error function $\tilde r_\alpha$ of Tikhonov regularisation, given by $\tilde r_\alpha(\lambda)=\frac{\alpha^2}{(\alpha+\lambda)^2}$.

To see this, we remark that, for $0<\alpha\le\beta$, the ratio
\[ \frac{\tilde r_\alpha(\lambda)}{\tilde r_\beta(\lambda)} = \left(\frac\alpha\beta\frac{\beta+\lambda}{\alpha+\lambda}\right)^2 \]
is decreasing in $\lambda$. Therefore, for $\lambda\ge\beta$, we get that
\[ \frac{\tilde r_\alpha(\lambda)}{\tilde r_\beta(\lambda)} \le \frac4{(1+\frac\beta\alpha)^2} \le \frac4{1+(\frac\beta\alpha)^2} \le \frac C{g(\frac\beta\alpha)} \le C\frac{\varphi(\alpha)}{\varphi(\beta)}, \]
which is \eqref{eqConvRateNoisyCond1}.

We similarly find for $\lambda\le\alpha$ that
\[ \frac{\tilde r_\alpha(\lambda)}{\tilde r_\beta(\lambda)} \ge \frac14\left(1+\frac\alpha\beta\right)^2  \ge \frac14 \ge \frac14\frac{\varphi(\alpha)}{\varphi(\beta)}, \]
which is \eqref{eqConvRateNoisyCond2} with $\tilde C=\frac14$.
\end{remark}

We want to apply this theorem now to the two special cases discussed previously in \autoref{exTikhonov}.
\begin{example}
\begin{enumerate}
\item
In the case of \autoref{exTikhonov}~\ref{enTikhonovClassical}, where we considered Tikhonov regularisation with a convergence rate given by $\varphi(\alpha)=\alpha^{2\nu}$ for some $\nu\in(0,1)$, the condition \eqref{eqSubhomog} in \autoref{thConvRateNoisy} is clearly fulfilled with $g(\gamma)=\gamma^{2\nu}$. In particular, $g$ satisfies that $g(\gamma)\le 1+\gamma^2$, which is \eqref{eqConvRateNoisyCondTik} with $C=4$, and thus the conditions \eqref{eqConvRateNoisyCond1} and \eqref{eqConvRateNoisyCond2} in \autoref{thConvRateNoisy} follow as in the remark above.

So, we can apply \autoref{thConvRateNoisy} and it only remains to calculate
\[ \tilde\varphi^{-1}(\delta) = \delta^{\frac2{2\nu+1}}\quad\text{and}\quad\psi(\delta) = \delta^{2-\frac2{2\nu+1}} = \delta^{\frac{4\nu}{2\nu+1}}. \]
Thus, we recover the classical result, see \cite[Theorem 2.6]{Neu97}, that the convergence rate $\|x_\alpha(y)-x^\dag\|^2=\mathcal O(\alpha^{2\nu})$ for the correct data $y$ is equivalent to the convergence rate $\sup_{\tilde y\in\bar B_\delta(y)}\inf_{\alpha>0}\|x_\alpha(\tilde y)-x^\dag\|^2=\mathcal O(\delta^{\frac{4\nu}{2\nu+1}})$ for noisy data.
\item
Next, we look at Tikhonov regularisation with the logarithmic convergence rate 
\begin{equation*}
\varphi(\alpha)= \begin{cases}
                          \left|\log\alpha\right|^{-\nu} & \text{if}\;0 < \alpha < \e^{-(1+\nu)},\\
                          (1+\nu)^{-\nu} &\text{if}\;\alpha\ge\e^{-(1+\nu)},
                         \end{cases}
\end{equation*}
see \autoref{exTikhonov}~\ref{enTikhonovLog}.
First, we remark that $\varphi$ is concave. This is because $\varphi$ is increasing, constant for $\alpha> \e^{-(1+\nu)}$, and for $0 < \alpha<\e^{-(1+\nu)}$ we have
\[ \varphi''(\alpha) = \frac\nu{\alpha^2}\left|\log\alpha\right|^{-(\nu+2)}\left(1+\nu-\left|\log\alpha\right|\right) < 0. \]
Therefore, and because $\varphi(0)=0$, we have
\[ \varphi(\gamma\alpha) \le \gamma\varphi(\alpha)\quad\text{for all}\quad\gamma\ge1,\;\alpha>0. \]
Thus, using that $\varphi$ is increasing, the requirement \eqref{eqSubhomog} in \autoref{thConvRateNoisy} 
is fulfilled with
\[ g(\gamma) = \begin{cases}1&\text{if}\;\gamma<1, \\\gamma&\text{if}\;\gamma\ge1.\end{cases} \]
In particular, this function $g$ satisfies the inequality \eqref{eqConvRateNoisyCondTik} with $C=4$ and therefore, also the conditions \eqref{eqConvRateNoisyCond1} and \eqref{eqConvRateNoisyCond2} in \autoref{thConvRateNoisy} are fulfilled according to the previous remark.

To get the corresponding function $\psi$, as defined in \eqref{eqConvRateNoisyFct}, we have to solve the implicit equation $\delta = \tilde\varphi(\frac{\delta^2}{\psi(\delta)})$, where $\tilde\varphi$ is defined in \eqref{eqConvRateNoisyFct} and with the specific choice of $\varphi(\alpha) =\left|\log\alpha\right|^{-\nu}$ for $\alpha<\e^{-(1+\nu)}$ satisfies $\tilde\varphi^2(\alpha)=\alpha\left|\log\alpha\right|^{-\nu}$.
This equation then reads as follows:
\begin{equation}\label{eqConvRateNoisyTikLog}
\psi(\delta) = \left|\log\frac{\delta^2}{\psi(\delta)}\right|^{-\nu}.
\end{equation}
By solving this equation for $\delta$, we get
\[ \delta=\sqrt{\psi(\delta)}\exp\left(-\frac1{2\psi^{\frac1\nu}(\delta)}\right), \]
which, in particular, shows that the function $\psi$ is increasing and furthermore, because of $\lim_{\delta\downarrow0}\psi(\delta)=0$, $\psi(\delta)<1$ for sufficiently small $\delta>0$.
Therefore, we find for small $\delta>0$ that
\begin{equation}\label{eqConvRateNoisyFctTikLog}
\delta\le \exp\left(-\frac1{2\psi^{\frac1\nu}(\delta)}\right),\quad\text{that is}\quad \psi(\delta)\ge|2\log\delta|^{-\nu}.
\end{equation}

Moreover, if we write $\psi$ as 
\[ \psi(\delta) = \left|\log\delta\right|^{-\nu}f(\delta) \]
for some function $f$, the implicit equation \eqref{eqConvRateNoisyTikLog} becomes
\[ f(\delta)=\left|\frac{\log\delta}{\log(f(\delta))-2\log\delta-\log(|\log\delta|^\nu)}\right|^\nu. \]
Since $\lim_{\delta\downarrow0}\frac{\log(\left|\log\delta\right|^\nu)}{\log\delta}=0$, we find parameters $\varepsilon\in(0,1)$ and $\delta_0\in(0,1)$ such that we have for all $\delta<\delta_0$ the inequality $0\le\log(\left|\log\delta\right|^\nu)\le\varepsilon\left|\log\delta\right|$. Assuming that $f(\delta)\ge1$ gives 
\[ f(\delta)\le\left(\frac{\left|\log\delta\right|}{\log(f(\delta))+(2-\varepsilon)\left|\log\delta\right|}\right)^\nu \le\frac1{(2-\varepsilon)^\nu} < 1, \]
which is a contradiction to the assumption. Thus, $f(\delta) <1$. 

Since we know already from \eqref{eqConvRateNoisyFctTikLog} that $f(\delta) \ge 2^{-\nu}$, it therefore follows from \autoref{thConvRateNoisy} that the convergence rate $\|x_\alpha(y)-x^\dag\|^2=\mathcal O(\left|\log\alpha\right|^{-\nu})$ is equivalent to $\sup_{\tilde y\in\bar B_\delta(y)}\inf_{\alpha>0}\|x_\alpha(\tilde y)-x^\dag\|^2=\mathcal O(\left|\log\delta\right|^{-\nu})$.
\end{enumerate}
\end{example}

\section{Relation to Variational Inequalities}\label{seVarIneq}
Instead of characterising the convergence rate of the regularised solution via the behaviour of the spectral decomposition of the minimum-norm solution $x^\dag$, we may also check variational inequalities for the element $x^\dag$, see \cite{HofKalPoeSch07,SchGraGroHalLen09,HeiHof09,SchuKalHofKaz12}. In \cite{AndElbHooQiuSch15}, it was shown that for Tikhonov regularisation and convergence rates of the order $\mathcal O(\alpha^{2\nu})$, $\nu\in(0,1)$, such variational inequalities are equivalent to specific convergence rates.

In this section, we generalise this result to cover general regularisation methods and convergence rates.

\begin{proposition}\label{thVarIneq}
We consider again the setting of \autoref{noSetting}.
Moreover, let $\varphi:[0,\infty)\to[0,\infty)$ be an increasing, continuous function and $\nu\in(0,1)$.

Then, the following two statements are equivalent:
\begin{enumerate}
\item
There exists a constant $C>0$ with
\begin{equation}\label{eqVarIneqSpectralTail}
e(\lambda) \le C\varphi^{2\nu}(\lambda)\quad\text{for all}\quad\lambda>0.
\end{equation}
\item
There exists a constant $\tilde C>0$ such that
\begin{equation}\label{eqVarIneq}
\left<x^\dag,x\right> \le \tilde C\|\varphi(L^*L)x\|^\nu\|x\|^{1-\nu}\quad\text{for all}\quad x\in X.
\end{equation}
\end{enumerate}
\end{proposition}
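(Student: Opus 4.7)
The plan is to prove both directions separately. Direction (ii) $\Rightarrow$ (i) is essentially immediate; direction (i) $\Rightarrow$ (ii) requires a spectral splitting of $x^\dag$ combined with a Lebesgue--Stieltjes integration by parts and a final optimisation in the cutoff parameter.

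For (ii) $\Rightarrow$ (i), I would test the variational inequality \eqref{eqVarIneq} against the specific element $x = E_{[0,\lambda]}x^\dag$. Then $\langle x^\dag, x\rangle = \|E_{[0,\lambda]}x^\dag\|^2 = e(\lambda)$, $\|x\|^2 = e(\lambda)$, and the spectral theorem yields $\|\varphi(L^*L)x\|^2 = \int_0^\lambda \varphi^2(\mu)\d\|E_\mu x^\dag\|^2 \le \varphi^2(\lambda)e(\lambda)$. Substituting these into \eqref{eqVarIneq} gives $e(\lambda) \le \tilde C\varphi^\nu(\lambda)\sqrt{e(\lambda)}$, and squaring produces \eqref{eqVarIneqSpectralTail} with $C = \tilde C^2$.

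For (i) $\Rightarrow$ (ii), I would fix $x\in X$ together with a cutoff parameter $\beta\in(0,\|L\|^2)$ with $\varphi(\beta)>0$ and split $x^\dag = E_{[0,\beta]}x^\dag + E_{(\beta,\|L\|^2]}x^\dag$. The low-frequency inner product is controlled directly by Cauchy--Schwarz and the hypothesis \eqref{eqVarIneqSpectralTail}, giving $|\langle E_{[0,\beta]}x^\dag, x\rangle| \le \sqrt{C}\,\varphi^\nu(\beta)\,\|x\|$. For the high-frequency part, $\varphi$ is positive on $(\beta,\|L\|^2]$, so I can insert the identity $\varphi^{-1}(L^*L)\varphi(L^*L)$, which is bounded on $\mathcal R(E_{(\beta,\|L\|^2]})$, and apply Cauchy--Schwarz to obtain
\[ |\langle E_{(\beta,\|L\|^2]}x^\dag, x\rangle| \le \|\varphi^{-1}(L^*L)E_{(\beta,\|L\|^2]}x^\dag\|\,\|\varphi(L^*L)x\|. \]
The critical estimate is then $\|\varphi^{-1}(L^*L)E_{(\beta,\|L\|^2]}x^\dag\|^2 = \int_\beta^{\|L\|^2}\varphi^{-2}(\lambda)\d e(\lambda)$, which I would bound by Lebesgue--Stieltjes integration by parts together with \eqref{eqVarIneqSpectralTail}. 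After a change of variable $u=\varphi(\lambda)$ the computation reduces to $2\int u^{2\nu-3}\d u$, whose antiderivative $u^{2\nu-2}/(2\nu-2)$ is finite because $\nu<1$, yielding the bound $\|\varphi^{-1}(L^*L)E_{(\beta,\|L\|^2]}x^\dag\|^2 \le \varphi^{-2}(\|L\|^2)\|x^\dag\|^2 + \frac{C}{1-\nu}\varphi^{2\nu-2}(\beta)$.

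Combining the two contributions, the overall estimate has the form $c_1\varphi^\nu(\beta)\|x\| + c_2\varphi^{\nu-1}(\beta)\|\varphi(L^*L)x\| + c_3\|\varphi(L^*L)x\|$. Setting $t = \varphi(\beta)$, the $\beta$-dependent terms read $at^\nu + bt^{\nu-1}$; since $\nu\in(0,1)$ this function has a unique minimiser $t^*=b(1-\nu)/(a\nu)$ on $(0,\infty)$ at which its value is a constant multiple of $a^{1-\nu}b^\nu$, producing exactly the exponents $\|x\|^{1-\nu}\|\varphi(L^*L)x\|^\nu$ required by \eqref{eqVarIneq}; the residual $c_3\|\varphi(L^*L)x\|$ term is absorbed via the trivial interpolation $\|\varphi(L^*L)x\| \le \varphi^{1-\nu}(\|L\|^2)\|x\|^{1-\nu}\|\varphi(L^*L)x\|^\nu$. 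The main technical obstacle will be executing the integration by parts for the Stieltjes measure $\d e$ (which may have atoms) and the change of variable when $\varphi$ is merely continuous and increasing; the pivotal insight is that inserting $\varphi^{-1}$ rather than $\varphi^{-\nu}$ in the Cauchy--Schwarz step produces a polynomial rather than a logarithmic divergence at the lower endpoint, which is precisely what allows the final optimisation to reproduce the clean exponents $\nu$ and $1-\nu$.
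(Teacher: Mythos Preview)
Your proposal is correct and structurally matches the paper's proof: the direction (ii)$\Rightarrow$(i) via testing with $E_{[0,\lambda]}x^\dag$, and for (i)$\Rightarrow$(ii) the spectral splitting at a cutoff, the Cauchy--Schwarz step with the insertion of $\varphi(L^*L)^{\pm1}$ on the high-frequency part, and the Stieltjes integration by parts bounding $\int\varphi^{-2}\,\d e$ by a constant times $\varphi^{2\nu-2}$ at the cutoff, are all exactly what the paper does.

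The one genuine difference is the final step. Where you minimise $at^\nu+bt^{\nu-1}$ over $t=\varphi(\beta)$ by calculus, the paper instead chooses the cutoff implicitly as
\[
\Lambda=\inf\bigl\{\lambda>0:\bigl|\langle E_{[0,\lambda]}x^\dag,x\rangle\bigr|\ge\tfrac12\bigl|\langle x^\dag,x\rangle\bigr|\bigr\},
\]
so that by one-sided continuity both $|\langle E_{[0,\Lambda]}x^\dag,x\rangle|$ and $|\langle E_{[\Lambda,\infty)}x^\dag,x\rangle|$ dominate $\tfrac12|\langle x^\dag,x\rangle|$, and then concludes via $\langle x^\dag,x\rangle\le 2\,|\langle E_{[0,\Lambda]}x^\dag,x\rangle|^{1-\nu}|\langle E_{[\Lambda,\infty)}x^\dag,x\rangle|^\nu$. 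This halving device buys two things: it avoids the edge case you will have to handle separately (your calculus minimiser $t^*$ need not lie in $\varphi\bigl((0,\|L\|^2]\bigr)$), and it absorbs what you call the $c_3$-term automatically since the paper first bounds the boundary contribution $C\varphi^{2\nu-2}(\|L\|^2)$ by $C\varphi^{2\nu-2}(\Lambda)$ before taking the square root. Your explicit optimisation is equally valid once those boundary cases are dispatched, and arguably more transparent about where the exponents $\nu$ and $1-\nu$ come from.
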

\begin{proof}
\begin{itemize}
\item
Assume first that \eqref{eqVarIneq} holds. Then, we have for all $\lambda > 0$
\begin{align*}
\|E_{[0,\lambda]}x^\dag\|^2 &= \left<x^\dag,E_{[0,\lambda]}x^\dag\right> \\
&\le \tilde C\|\varphi(L^*L)E_{[0,\lambda]}x^\dag\|^\nu\|E_{[0,\lambda]}x^\dag\|^{1-\nu} \\
&\le \tilde C\varphi^\nu(\lambda)\|E_{[0,\lambda]}x^\dag\|,
\end{align*}
which implies \eqref{eqVarIneqSpectralTail} with $C = \tilde C^2$.

\item
On the other hand, if \eqref{eqVarIneqSpectralTail} is fulfilled, then we can estimate for arbitrary $\Lambda>0$ and every $x\in X$
\begin{equation}\label{eqVarIneqLow}
\left|\left<E_{[0,\Lambda]}x^\dag,x\right>\right| \le \|E_{[0,\Lambda]}x^\dag\|\|x\|\le \sqrt C\,\varphi^\nu(\Lambda)\|x\|.
\end{equation}

Furthermore, we get with the bounded, invertible operator $T=\varphi(L^*L)|_{\mathcal R(E_{[\Lambda,\infty)})}$ that
\begin{equation}\label{eqVarIneqHighSpectrum}
\begin{aligned}
\left|\left<E_{[\Lambda,\infty)}x^\dag,x\right>\right| &= \left|\left<T^{-1}E_{[\Lambda,\infty)}x^\dag,TE_{[\Lambda,\infty)}x\right>\right| \\
&\le \|TE_{[\Lambda,\infty)}x\|\sqrt{\lim_{\varepsilon\downarrow0}\int_{\Lambda-\varepsilon}^{\|L\|^2}\frac1{\varphi^2(\lambda)}\d e(\lambda)}.
\end{aligned}
\end{equation}
Integrating by parts, we can rewrite the integral in the form
\[ \int_{\Lambda-\varepsilon}^{\|L\|^2}\frac1{\varphi^2(\lambda)}\d e(\lambda) = \frac{e(\|L\|^2)}{\varphi^2(\|L\|^2)}-\frac{e(\Lambda-\varepsilon)}{\varphi^2(\Lambda-\varepsilon)}+2\int_{\Lambda-\varepsilon}^{\|L\|^2}\frac{e(\lambda)}{\varphi^3(\lambda)}\d\varphi(\lambda). \]
Using now \eqref{eqVarIneqSpectralTail} and dropping all negative terms, we arrive at
\[ \lim_{\varepsilon\downarrow0}\int_{\Lambda-\varepsilon}^{\|L\|^2}\frac1{\varphi^2(\lambda)}\d e(\lambda)\le \frac C{\varphi^{2-2\nu}(\|L\|^2)}+\frac C{1-\nu}\frac1{\varphi^{2-2\nu}(\Lambda)} \le \frac{c^2}{\varphi^{2-2\nu}(\Lambda)} \]
with the constant $c>0$ given by $c^2=C(1+\frac1{1-\nu})$. Plugging this into \eqref{eqVarIneqHighSpectrum}, we find that
\begin{equation}\label{eqVarIneqHigh}
\left|\left<E_{[\Lambda,\infty)}x^\dag,x\right>\right| \le \frac c{\varphi^{1-\nu}(\Lambda)}\|\varphi(L^*L)x\|.
\end{equation}

We now pick
\[ \Lambda = \inf\{\lambda>0\mid \left|\left<E_{[0,\lambda]}x^\dag,x\right>\right|\ge\tfrac12\left|\left<x^\dag,x\right>\right|\} \]
and assume that $\Lambda>0$; otherwise $\left<x^\dag,x\right>=0$ and \eqref{eqVarIneq} is trivially fulfilled. Then, the right continuity of $\lambda\mapsto\left<E_{[0,\lambda]}x^\dag,x\right>$ implies that
\[ \left|\left<E_{[0,\Lambda]}x^\dag,x\right>\right|\ge\frac12\left|\left<x^\dag,x\right>\right|. \]
Moreover, we have that
\[ \left|\left<E_{[\lambda,\infty)}x^\dag,x\right>\right|\ge\left|\left<x^\dag,x\right>\right|-\left|\left<E_{[0,\lambda]}x^\dag,x\right>\right|>\frac12\left|\left<x^\dag,x\right>\right| \]
for every $\lambda\in(0,\Lambda)$. Therefore, the left continuity of $\lambda\mapsto\left<E_{[\lambda,\infty)}x^\dag,x\right>$ implies that
\[ \left|\left<E_{[\Lambda,\infty)}x^\dag,x\right>\right|\ge\frac12\left|\left<x^\dag,x\right>\right|. \]

Thus, we get with the estimates \eqref{eqVarIneqLow} and \eqref{eqVarIneqHigh} that
\begin{align*}
\left<x^\dag,x\right> &\le 2\left|\left<E_{[0,\Lambda]}x^\dag,x\right>\right|^{1-\nu}\left|\left<E_{[\Lambda,\infty)}x^\dag,x\right>\right|^\nu \\
&\le 2C^{\frac{1-\nu}2}c^\nu\|\varphi(L^*L)x\|^\nu\|x\|^{1-\nu}.
\end{align*}
\end{itemize}
\end{proof}

We remark that the first part of this proof also works in the limit case $\nu=1$, which shows that \eqref{eqVarIneq} implies \eqref{eqVarIneqSpectralTail} also for $\nu=1$.
\begin{corollary}\label{thSsc}
We use again \autoref{noSetting}. Let further $\varphi:[0,\infty)\to[0,\infty)$ be an increasing, continuous function and $\nu\in(0,1]$.

Then, the standard source condition
\begin{equation}\label{eqSsc}
x^\dag \in \mathcal R(\varphi^\nu(L^*L))
\end{equation}
implies the variational inequality
\begin{equation}\label{eqSscVarIneq}
\left<x^\dag,x\right> \le C\|\varphi(L^*L)x\|^\nu\|x\|^{1-\nu}\quad\text{for all}\quad x\in X.
\end{equation}
for some constant $C>0$.

Conversely, the variational inequality \eqref{eqSscVarIneq} implies that
\[ x^\dag\in\mathcal R(\psi(L^*L)) \]
for every continuous function $\psi:[0,\infty)\to[0,\infty)$ with $\psi\ge c\varphi^\mu$ for some constants $c>0$ and some $\mu\in(0,\nu)$.
\end{corollary}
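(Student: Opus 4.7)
The plan is to prove the two directions separately, using Hölder's inequality on spectral integrals for the first and combining \autoref{thVarIneq} with a careful integration-by-parts estimate for the second.

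For the forward direction, I would start by writing $x^\dag=\varphi^\nu(L^*L)w$ for some $w\in X$, so that
\[ \left<x^\dag,x\right> = \left<w,\varphi^\nu(L^*L)x\right> \le \|w\|\,\|\varphi^\nu(L^*L)x\|. \]
The crucial step is then an interpolation inequality: representing $\|\varphi^\nu(L^*L)x\|^2=\int_0^{\|L\|^2}\varphi^{2\nu}(\lambda)\d\|E_\lambda x\|^2$ and applying Hölder's inequality to the measure $\d\|E_\lambda x\|^2$ with exponents $\frac1\nu$ and $\frac1{1-\nu}$ yields
\[ \|\varphi^\nu(L^*L)x\| \le \|\varphi(L^*L)x\|^\nu\|x\|^{1-\nu}, \]
which immediately gives \eqref{eqSscVarIneq} with $C=\|w\|$. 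The case $\nu=1$ is trivial since then the inequality reads $\langle x^\dag,x\rangle\le\|w\|\|\varphi(L^*L)x\|$.

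For the converse, I would first invoke \autoref{thVarIneq} (together with the remark that its first half extends to $\nu=1$) to convert \eqref{eqSscVarIneq} into the spectral tail estimate $e(\lambda)\le C\varphi^{2\nu}(\lambda)$. To show $x^\dag\in\mathcal R(\psi(L^*L))$ it suffices to produce $w\in X$ with $x^\dag=\psi(L^*L)w$, which by spectral theory (and using $E_{\{0\}}x^\dag=0$ since $x^\dag\in\mathcal N(L)^\perp$) reduces to verifying
\[ \int_0^{\|L\|^2}\frac1{\psi^2(\lambda)}\d e(\lambda) < \infty. \]
Using $\psi\ge c\varphi^\mu$, this is bounded by $\frac1{c^2}\int_0^{\|L\|^2}\varphi^{-2\mu}(\lambda)\d e(\lambda)$, and I would estimate this exactly as in the proof of \autoref{thVarIneq}: integration by parts gives
\[ \int_\varepsilon^{\|L\|^2}\frac{\d e(\lambda)}{\varphi^{2\mu}(\lambda)} = \frac{e(\|L\|^2)}{\varphi^{2\mu}(\|L\|^2)}-\frac{e(\varepsilon)}{\varphi^{2\mu}(\varepsilon)}+2\mu\int_\varepsilon^{\|L\|^2}\frac{e(\lambda)}{\varphi^{2\mu+1}(\lambda)}\d\varphi(\lambda), \]
and inserting $e(\lambda)\le C\varphi^{2\nu}(\lambda)$ replaces the last integrand by a constant times $\varphi^{2\nu-2\mu-1}\d\varphi$, which is an integrable exact differential precisely because $2\nu-2\mu>0$.

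The main obstacle I anticipate is the boundary term at $\lambda=\varepsilon$ when $\varphi(0)=0$. Here the estimate $e(\varepsilon)\le C\varphi^{2\nu}(\varepsilon)$ saves the day: the ratio $\frac{e(\varepsilon)}{\varphi^{2\mu}(\varepsilon)}\le C\varphi^{2\nu-2\mu}(\varepsilon)$ tends to zero as $\varepsilon\downarrow0$ by continuity of $\varphi$ at the origin, again using $\nu>\mu$. (If $\varphi(0)>0$, there is nothing to check near zero since the integrand is bounded there.) The hypothesis $\mu<\nu$ is therefore used twice, once to make the improper integral of $\varphi^{2\nu-2\mu-1}\d\varphi$ converge and once to control the boundary, which explains why the strict inequality cannot be dropped.
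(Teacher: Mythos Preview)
Your proposal is correct and follows essentially the same route as the paper. The forward direction is identical (the paper simply cites the interpolation inequality rather than deriving it via H\"older), and in the converse both arguments reduce, after invoking \autoref{thVarIneq}, to the uniform bound $\int_\varepsilon^{\|L\|^2}\varphi^{-2\mu}\,\d e\le\text{const}$ obtained by integration by parts together with $e\le C\varphi^{2\nu}$ and $\mu<\nu$; the only cosmetic difference is that the paper packages this as the dual estimate $\langle x^\dag,x\rangle\le\tilde c\,\|\psi(L^*L)x\|$ and then cites \cite[Lemma~8.21]{SchGraGroHalLen09} for the range conclusion, whereas you read off $x^\dag\in\mathcal R(\psi(L^*L))$ directly from the finiteness of $\int\psi^{-2}\,\d e$.
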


\begin{proof}
\begin{itemize}
\item
If $x^\dag$ fulfils \eqref{eqSsc}, then there exists an element $\omega\in X$ with
\begin{equation}\label{eqSscToVarIneq}
\left<x^\dag,x\right> = \left<\omega,\varphi^\nu(L^*L)x\right> \le \|\omega\|\|\varphi^\nu(L^*L)x\|.
\end{equation}
Using the interpolation inequality, see for example \cite[Chapter 2.3]{EngHanNeu96}, we find
\[ \left<x^\dag,x\right> \le \|\omega\|\|\varphi(L^*L)x\|^\nu\|x\|^{1-\nu}, \]
which is \eqref{eqSscVarIneq} with $C=\|\omega\|$.
\item
If, on the other hand, \eqref{eqSscVarIneq} holds, then, according to \autoref{thVarIneq}, there exists a constant $\tilde C>0$ such that $e(\lambda)\le\tilde C\varphi^{2\nu}(\lambda)$. Now, similarly to the proof of \autoref{thVarIneq} we get with $T=\psi(L^*L)|_{\mathcal R(E_{(\Lambda,\infty)})}$ that
\[ \left<E_{(\Lambda,\infty)}x^\dag,x\right> \le \left<T^{-1}E_{(\Lambda,\infty)}x^\dag,TE_{(\Lambda,\infty)}x\right>\le \|TE_{(\Lambda,\infty)}x\|\sqrt{\int_\Lambda^{\|L\|^2}\frac1{\psi^2(\lambda)}\d e(\lambda)}, \]
and, using the lower bound on $\psi$, that
\[ \int_\Lambda^{\|L\|^2}\frac1{\psi^2(\lambda)}\d e(\lambda) \le \frac1{c^2}\int_\Lambda^{\|L\|^2}\frac1{\varphi^{2\mu}(\lambda)}\d e(\lambda)
\le \tilde c^2\varphi^{2(\nu-\mu)}(\|L\|^2), \]
for some constant $\tilde c>0$.
So,
\[ \left<x^\dag,x\right> = \lim_{\Lambda\to0}\left<E_{(\Lambda,\infty)}x^\dag,x\right> \le \tilde c\varphi^{\nu-\mu}(\|L\|^2)\|\psi(L^*L)x\| \]
which implies that $x^\dag\in\mathcal R(\psi(L^*L))$, see for example \cite[Lemma 8.21]{SchGraGroHalLen09}.
\end{itemize}
\end{proof}

\begin{remark}
In general, the inequality \eqref{eqSscVarIneq} does not imply the standard source condition \eqref{eqSsc}. Let us for example consider the case where we have an increasing, continuous function $\varphi:[0,\infty)\to[0,\infty)$ with $\varphi(0)=0$, $\varphi(\lambda)>0$ for all $\lambda>0$, and
\[ c\varphi^{2\nu}(\lambda)\le e(\lambda)\le C\varphi^{2\nu}(\lambda)\quad\text{for all}\quad \lambda>0 \]
for some constants $0<c\le C$. 

Now, the standard source condition \eqref{eqSsc} would imply that we can find a $\xi\in\mathcal N(L)^\perp$ with $x^\dag=\varphi^\nu(L^*L)\xi$. Thus, we would get with $T=\varphi^\nu(L^*L)|_{\mathcal R(E_{(\Lambda,\infty)})}$ that
\[ \|\xi\|^2 = \lim_{\Lambda\to0}\|E_{(\Lambda,\infty)}\xi\|^2 = \lim_{\Lambda\to0}\|T^{-1}E_{(\Lambda,\infty)}x^\dag\|^2 = \lim_{\Lambda\to0}\int_\Lambda^{\|L\|^2}\frac1{\varphi^{2\nu}(\lambda)}\d e(\lambda). \]
However, in the limit $\Lambda\to0$, we have that
\begin{align*}
\int_\Lambda^{\|L\|^2}\frac1{\varphi^{2\nu}(\lambda)}\d e(\lambda) &= \frac{e(\|L\|^2)}{\varphi^{2\nu}(\|L\|^2)}-\frac{e(\Lambda)}{\varphi^{2\nu}(\Lambda)}+2\nu\int_\Lambda^{\|L\|^2}\frac{e(\lambda)}{\varphi^{2\nu+1}(\lambda)}\d\varphi(\lambda) \\
& \ge c-C+2\nu c\log\left(\frac{\varphi(\|L\|^2)}{\varphi(\Lambda)}\right)\to\infty,
\end{align*}
which is a contradiction to the existence of such a point $\xi$.
\end{remark}

\section{Connection to Approximate Source Conditions}\label{seApproxSourcCond}
Another approach to weaken the standard source condition~\eqref{eqSsc} to obtain a condition which is equivalent to the convergence rate was introduced in~\cite{HofMat07}, see also \cite{FleHofMat11}. The idea is that for the argument~\eqref{eqSscToVarIneq}, which shows that the standard source condition~\eqref{eqSsc} implies the variational inequality \eqref{eqSscVarIneq}, it would have been enough to be able to approximate the minimum-norm solution $x^\dag$ by a bounded sequence in $\mathcal R(\varphi^\nu(L^*L))$. And the smaller the bound on the sequence, the smaller the constant $C$ in the variational inequality \eqref{eqSscVarIneq} will be. Therefore, the distance between $x^\dag$ and $\mathcal R(\varphi^\nu(L^*L))\cap\bar B_R(0)$ as a function of the radius $R$ of the closed ball $\bar B_R(0)=\{x\in X\mid\|x\|\le R\}$ should be directly related to the convergence rate.
\begin{definition}
In the setting of \autoref{noSetting}, we define the distance function~$d_\varphi$ of a continuous function~$\varphi:[0,\infty)\to[0,\infty)$ by
\begin{equation}\label{eqDistanceFct}
d_\varphi(R) = \inf_{\xi\in\bar B_R(0)}\|x^\dag-\varphi(L^*L)\xi\|.
\end{equation}
\end{definition}

Indeed, this distance function gives us directly an upper bound on the error between the regularised solution $x_\alpha(y)$ and the minimum-norm solution $x^\dag$, see \cite[Theorem~5.5]{HofMat07} or \cite[Proposition~2]{FleHofMat11}. For convenience, we repeat the argument here.
\begin{lemma}\label{thDistanceFctError}
We use \autoref{noSetting} and assume that $\varphi:[0,\infty)\to[0,\infty)$ is an increasing, continuous function with $\varphi(0)=0$ so that there exists a constant $A>0$ such that the inequality
\begin{equation}\label{eqDistanceFctQualification}
\sqrt{\tilde r_\alpha(\lambda)}\varphi(\lambda) \le A\varphi(\alpha)\quad\text{for all}\quad \lambda>0
\end{equation}
holds for every $\alpha>0$. 

Then, we have for every $\xi\in X$ that
\begin{equation}\label{eqDistanceFctError}
\|x_\alpha(y)-x^\dag\| \le \|x^\dag-\varphi(L^*L)\xi\|+A\varphi(\alpha)\|\xi\|\quad\text{for all}\quad\alpha>0.
\end{equation}
\end{lemma}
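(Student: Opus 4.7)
The plan is to split $x^\dag$ as $x^\dag = (x^\dag - \varphi(L^*L)\xi) + \varphi(L^*L)\xi$, apply the error operator $r_\alpha(L^*L)L^*L - I$ (which is exactly what produces $x_\alpha(y) - x^\dag$ by \eqref{eqErrorExact}) to each piece, and bound the two resulting terms separately using the triangle inequality. The whole argument is then just a matter of reading off the multiplier bounds in the spectral calculus.

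First I would note that by \autoref{deGenerator}\ref{enGeneratorBounded} we have $\lambda r_\alpha(\lambda) \le 1$ and $r_\alpha(\lambda) \ge 0$, so the function $\lambda\mapsto 1 - \lambda r_\alpha(\lambda)$ takes values in $[0,1]$, which means $\tilde r_\alpha(\lambda) \in [0,1]$ for every $\lambda \ge 0$. By spectral calculus the operator $r_\alpha(L^*L)L^*L - I$ therefore has operator norm at most $1$, so
\[
\|(r_\alpha(L^*L)L^*L - I)(x^\dag - \varphi(L^*L)\xi)\| \le \|x^\dag - \varphi(L^*L)\xi\|.
\]

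For the remaining term, I would use the spectral theorem applied to $L^*L$ to write
\[
\|(r_\alpha(L^*L)L^*L - I)\varphi(L^*L)\xi\|^2 = \int_0^{\|L\|^2}\tilde r_\alpha(\lambda)\varphi^2(\lambda)\d\|E_{[0,\lambda]}\xi\|^2,
\]
and now the qualification assumption \eqref{eqDistanceFctQualification} gives $\tilde r_\alpha(\lambda)\varphi^2(\lambda) \le A^2\varphi^2(\alpha)$ pointwise in $\lambda$, so the integral is bounded by $A^2\varphi^2(\alpha)\|\xi\|^2$ and hence
\[
\|(r_\alpha(L^*L)L^*L - I)\varphi(L^*L)\xi\| \le A\varphi(\alpha)\|\xi\|.
\]

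Combining both bounds with the triangle inequality on $x_\alpha(y) - x^\dag = (r_\alpha(L^*L)L^*L - I)x^\dag$ yields \eqref{eqDistanceFctError}. There is no real obstacle here; the only point that requires a moment of care is verifying that $\tilde r_\alpha \le 1$ so that the first term really is controlled by $\|x^\dag - \varphi(L^*L)\xi\|$ without any extra constant, and this drops out immediately from the bound $r_\alpha(\lambda) \le 1/\lambda$ in \autoref{deGenerator}\ref{enGeneratorBounded}.
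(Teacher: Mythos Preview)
Your proof is correct and follows essentially the same line as the paper's: both split $x^\dag$ into $(x^\dag-\varphi(L^*L)\xi)+\varphi(L^*L)\xi$, apply the error operator (whose multiplier has absolute value $\sqrt{\tilde r_\alpha}$), bound the first piece via $\tilde r_\alpha\le1$ and the second via the qualification inequality \eqref{eqDistanceFctQualification} in the spectral integral. Your explicit verification that $\tilde r_\alpha\le1$ from \autoref{deGenerator}\ref{enGeneratorBounded} is the only added detail; otherwise the arguments coincide.
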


\begin{proof}
For every vector $\xi\in X$, we find from \eqref{eqErrorExact} with the definition \eqref{eqGeneratorError} of the error function $\tilde r_\alpha$ that
\[ \|x_\alpha(y)-x^\dag\| = \|\tilde r_\alpha^{\frac12}(L^*L)x^\dag\| \le \|\tilde r_\alpha^{\frac12}(L^*L)(x^\dag-\varphi(L^*L)\xi)\|+\|\tilde r_\alpha^{\frac12}(L^*L)\varphi(L^*L)\xi\|. \]
Now, since $\tilde r_\alpha(\lambda)\le1$, we have that $\|\tilde r_\alpha(L^*L)\|\le 1$. Moreover, with $e_\xi(\lambda)=\|E_{(0,\lambda]}\xi\|^2$, we get from the inequality \eqref{eqDistanceFctQualification} that
\[ \|\tilde r_\alpha^{\frac12}(L^*L)\varphi(L^*L)\xi\|^2 = \int_0^{\|L\|^2}\tilde r_\alpha(\lambda)\varphi^2(\lambda)\d e_\xi(\lambda) \le A^2\varphi^2(\alpha)\|\xi\|^2. \]
So, putting the two inequalities together, we obtain \eqref{eqDistanceFctError}.
\end{proof}

Thus, taking the infimum over all $\xi\in\bar B_R(0)$ in \eqref{eqDistanceFctError}, the error $\|x_\alpha(y)-x^\dag\|$ can be bound by a combination of $d_\varphi(R)$ and $\varphi(\alpha)R$. By balancing these terms, we obtain from a given distance function $d_\varphi$ the corresponding convergence rate.

Conversely, we can also show that an upper bound on the spectral projections of the minimum-norm solution gives us an upper bound on the distance function, which then yields another equivalent characterisation for the convergence rate of the regularisation method.

\begin{proposition}\label{thDistanceFctCr}
We use \autoref{noSetting} and assume that $\varphi:[0,\infty)\to[0,\infty)$ is an increasing, continuous function with $\varphi(0)=0$ so that there exists a constant $A>0$ with
\begin{equation}\label{eqDistanceFctCrQualification}
\sqrt{\tilde r_\alpha(\lambda)}\varphi(\lambda) \le A\varphi(\alpha)\quad\text{for all}\quad \lambda>0,\;\alpha>0.
\end{equation}
Moreover, let $d_\varphi$ be the distance function of $\varphi$, and let $\nu\in(0,1)$ be arbitrary.

Then, the following statements are equivalent:
\begin{enumerate}
\item
There exists a constant $C>0$ so that
\begin{equation}\label{eqDistanceFctCrSpectralDecay}
e(\lambda) \le C\varphi^{2\nu}(\lambda)\quad\text{for all}\quad \lambda>0.
\end{equation}
\item
There exists a constant $\tilde C>0$ so that
\begin{equation}\label{eqDistanceFctCr}
d_\varphi(R) \le \tilde CR^{-\frac\nu{1-\nu}}\quad\text{for all}\quad R>0.
\end{equation}
\end{enumerate}
\end{proposition}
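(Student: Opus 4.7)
The plan is to prove each implication separately, using a triangle-inequality argument in one direction and an explicit construction via functional calculus in the other.

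For the direction $(ii) \Rightarrow (i)$, I would use a simple splitting. For arbitrary $\Lambda, R > 0$ and $\xi \in \bar B_R(0)$, write
\[ E_{[0,\Lambda]} x^\dag = E_{[0,\Lambda]}(x^\dag - \varphi(L^*L)\xi) + E_{[0,\Lambda]}\varphi(L^*L)\xi \]
and apply the triangle inequality in $X$. Since $\varphi$ is increasing, $\|E_{[0,\Lambda]}\varphi(L^*L)\xi\| \le \varphi(\Lambda)\|\xi\| \le \varphi(\Lambda) R$, while taking the infimum over $\xi \in \bar B_R(0)$ in the first term produces $d_\varphi(R)$. Inserting the hypothesis~\eqref{eqDistanceFctCr} yields
\[ \sqrt{e(\Lambda)} \le \tilde C R^{-\nu/(1-\nu)} + \varphi(\Lambda) R \quad\text{for every}\quad R>0. \]
Balancing the two summands by choosing $R = (\tilde C/\varphi(\Lambda))^{1-\nu}$ produces $\sqrt{e(\Lambda)}\le 2\tilde C^{1-\nu}\varphi^\nu(\Lambda)$, which is \eqref{eqDistanceFctCrSpectralDecay}.

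For the converse direction $(i) \Rightarrow (ii)$, I would construct an explicit family of approximating vectors. For each $\Lambda > 0$ set $\xi_\Lambda = g_\Lambda(L^*L)x^\dag$, where $g_\Lambda$ denotes the bounded Borel function equal to $1/\varphi(\lambda)$ on $(\Lambda,\infty)$ and zero elsewhere; this is bounded because $\varphi$ is increasing and $\varphi(\Lambda) > 0$. Then $\varphi(L^*L)\xi_\Lambda = E_{(\Lambda,\infty)}x^\dag$, so $\|x^\dag - \varphi(L^*L)\xi_\Lambda\|^2 = e(\Lambda) \le C\varphi^{2\nu}(\Lambda)$ by (i). It remains to estimate $\|\xi_\Lambda\|^2 = \int_\Lambda^{\|L\|^2}\varphi^{-2}(\lambda)\d e(\lambda)$. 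Using the same integration-by-parts argument as in the proof of \autoref{thVarIneq}, together with the hypothesis $e(\lambda)\le C\varphi^{2\nu}(\lambda)$ and the fact that the antiderivative $\varphi^{2\nu-2}/(2\nu-2)$ has a negative exponent, one gets a bound of the form $\|\xi_\Lambda\|^2 \le c^2\varphi^{2\nu-2}(\Lambda)$ with $c$ depending only on $C$, $\nu$, and $\varphi(\|L\|^2)$.

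Setting $R = c\varphi^{\nu-1}(\Lambda)$, which is strictly decreasing in $\Lambda$ because $\nu - 1 < 0$ and $\varphi$ is increasing, one has $\xi_\Lambda \in \bar B_R(0)$ and $\varphi(\Lambda) = (c/R)^{1/(1-\nu)}$. Therefore $d_\varphi(R) \le \|x^\dag - \varphi(L^*L)\xi_\Lambda\| \le \sqrt C\,\varphi^\nu(\Lambda) = \sqrt C\,c^{\nu/(1-\nu)}R^{-\nu/(1-\nu)}$, which establishes \eqref{eqDistanceFctCr} for every $R$ in the range of the map $\Lambda\mapsto c\varphi^{\nu-1}(\Lambda)$, namely for $R \ge c\varphi^{\nu-1}(\|L\|^2)$. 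For the remaining bounded interval of small $R$'s the claim follows trivially from the estimate $d_\varphi(R) \le \|x^\dag\|$ after enlarging $\tilde C$. The main obstacle is the integration-by-parts estimate for $\|\xi_\Lambda\|^2$: we must keep track of the leading $\varphi^{2\nu-2}(\Lambda)$ contribution, which diverges as $\Lambda \downarrow 0$ at exactly the rate that, after the substitution $R = c\varphi^{\nu-1}(\Lambda)$, matches the prescribed growth $R^{-\nu/(1-\nu)}$ of $d_\varphi$; all other pieces of the argument are routine.
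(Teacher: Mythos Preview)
Your argument for $(i)\Rightarrow(ii)$ is essentially the same as the paper's: you define the same approximating family $\xi_\Lambda$ via functional calculus, bound $\|\xi_\Lambda\|^2$ by the same integration-by-parts computation (the paper cites this explicitly as the calculation leading to $\|\xi_\alpha\|^2\le c^2\varphi^{-(2-2\nu)}(\alpha)$ with $c^2=C(1+\tfrac1{1-\nu})$), and then substitute $R=c\varphi^{\nu-1}(\Lambda)$. You are actually a bit more careful than the paper in that you address the residual range $R<c\varphi^{\nu-1}(\|L\|^2)$ explicitly, which the paper leaves implicit.

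Your argument for $(ii)\Rightarrow(i)$, however, is genuinely different and more direct. The paper takes a detour through the regularised solutions: it first invokes \autoref{thDistanceFctError} (which uses the qualification assumption~\eqref{eqDistanceFctCrQualification}) to get $\|x_\alpha(y)-x^\dag\|\le d_\varphi(R)+A\varphi(\alpha)R$, balances this to obtain the convergence rate $\|x_\alpha(y)-x^\dag\|=\mathcal O(\varphi^\nu(\alpha))$, and then applies \autoref{thCr} (again relying on the qualification condition) to deduce the spectral decay~\eqref{eqDistanceFctCrSpectralDecay}. Your splitting $E_{[0,\Lambda]}x^\dag=E_{[0,\Lambda]}(x^\dag-\varphi(L^*L)\xi)+E_{[0,\Lambda]}\varphi(L^*L)\xi$ bypasses both of these auxiliary results and yields~\eqref{eqDistanceFctCrSpectralDecay} directly from~\eqref{eqDistanceFctCr}. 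In particular, your route shows that the implication $(ii)\Rightarrow(i)$ does not actually require the qualification hypothesis~\eqref{eqDistanceFctCrQualification} or any property of the regularisation at all; it is a purely spectral statement about $x^\dag$. The paper's approach, on the other hand, makes transparent how the distance-function bound feeds into the convergence rate of $x_\alpha(y)$, which is after all the object of interest.
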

\begin{proof}
\begin{itemize}
\item
Assume first that \eqref{eqDistanceFctCr} holds. Then, from \autoref{thDistanceFctError}, we get by taking the infimum of~\eqref{eqDistanceFctError} over all $\xi\in\bar B_R(0)$ for an arbitrary $R>0$ that
\[ \|x_\alpha(y)-x^\dag\| \le d_\varphi(R)+A\varphi(\alpha)R \le \tilde CR^{-\frac\nu{1-\nu}}+A\varphi(\alpha)R. \]
Since the first term is decreasing and the second term is increasing in $R$, we pick for~$R$ the value $R(\alpha)$ given by
\[ R^{-\frac\nu{1-\nu}}(\alpha)=\varphi(\alpha)R(\alpha),\quad\text{that is}\quad R(\alpha) = \varphi^{-(1-\nu)}(\alpha). \]
Thus, we end up with
\[ \|x_\alpha(y)-x^\dag\|\le(\tilde C+A)\varphi^\nu(\alpha). \]
Applying now \autoref{thCr} with the function $\varphi$ therein replaced by $\varphi^{2\nu}$ (we remark that the condition \eqref{eqQualification} is then fulfilled with $\mu=\nu$, since \eqref{eqDistanceFctCrQualification} implies $\varphi^{2\nu}(\lambda)\tilde r_\alpha^\nu(\lambda)\le A^{2\nu}\varphi^{2\nu}(\alpha)$), we find that there exists a constant $C>0$ so that~\eqref{eqDistanceFctCrSpectralDecay} holds.
\item
Conversely, if we have the relation~\eqref{eqDistanceFctCrSpectralDecay}, then we define for arbitrary $\alpha>0$ with the operator $T=\varphi(L^*L)|_{\mathcal R(E_{(\alpha,\infty)})}$ the element
\[ \xi_\alpha = T^{-1}E_{(\alpha,\infty)}x^\dag. \]
Now, the distance of $\varphi(L^*L)\xi_\alpha$ to the minimum-norm solution $x^\dag$ can be estimated according to \eqref{eqDistanceFctCrSpectralDecay} by
\begin{equation}\label{eqDistanceFctValue}
\|x^\dag-\varphi(L^*L)\xi_\alpha\|^2 = \|E_{[0,\alpha]}x^\dag\|^2 \le C\varphi^{2\nu}(\alpha).
\end{equation}

Moreover, we can get an upper bound on the norm of $\xi_\alpha$ by
\[ \|\xi_\alpha\|^2 = \int_\alpha^{\|L\|^2}\frac1{\varphi^2(\lambda)}\d e(\lambda) = \frac{e(\|L\|^2)}{\varphi^2(\|L\|^2)}-\frac{e(\alpha)}{\varphi^2(\alpha)}+2\int_\alpha^{\|L\|^2}\frac{e(\lambda)}{\varphi^3(\lambda)}\d\varphi(\lambda). \]
Using assumption \eqref{eqDistanceFctCrSpectralDecay}, evaluating the integral, and dropping the resulting 
two negative terms, we find that
\begin{equation}\label{eqDistanceFctNorm}
\|\xi_\alpha\|^2\le \frac C{\varphi^{2-2\nu}(\|L\|^2)}+\frac C{1-\nu}\frac1{\varphi^{2-2\nu}(\alpha)} \le \frac{c^2}{\varphi^{2-2\nu}(\alpha)}
\end{equation}
with $c^2=C(1+\frac1{1-\nu})$.

So, combining \eqref{eqDistanceFctValue} and~\eqref{eqDistanceFctNorm}, we have by definition \eqref{eqDistanceFct} 
of the distance function $d_\varphi$ with $R=c\varphi^{-(1-\nu)}(\alpha)$ that  
\[ d_\varphi(c\varphi^{-(1-\nu)}(\alpha)) \le \sqrt C\varphi^\nu(\alpha), \]
and thus it follows by switching to the variable $R$ that
\[ d_\varphi(R) \le \tilde C R^{-\frac\nu{1-\nu}}, \]
where $\tilde C=\sqrt C\,c^{\frac\nu{1-\nu}}$.
\end{itemize}

\end{proof}

\section*{Conclusion}

In this paper, we have proven optimal convergence rates results for regularisation methods for solving linear ill-posed 
operator equations in Hilbert spaces. The result generalises existing convergence rates results on optimality 
of \cite{Neu97} to general source conditions, such as logarithmic source conditions. The results state that 
convergence rates results of regularised solution require a certain decay of the solution in terms of the spectral 
decomposition. Moreover, we also provide optimality results under variational source conditions, extending the results of \cite{AndElbHooQiuSch15}. It is interesting to note that variational source conditions are equivalent to convergence rates 
of the regularised solutions, while the classical results are not. Moreover, we also show that rates of the distance function 
developed in \cite{HofMat07,FleHofMat11} are equivalent to convergence rates of the regularised solutions.

\section*{Acknowledgements}
VA is supported by CNPq grant 201644/2014-2. The work of OS has been supported by the Austrian Science Fund (FWF)
within the national research network Geometry and Simulation, project S11704 (Variational Methods for Imaging on Manifolds) and Project P26687-N25 
(Interdisciplinary Coupled Physics Imaging).

\section*{References}
\printbibliography[heading=none]

\end{document}